\documentclass[12pt]{amsart}

\usepackage{graphicx}
\usepackage{amssymb}
\usepackage{latexsym}
\usepackage{exscale}
\usepackage{lscape}

\year=2014 \month=09 \day=24

%

\usepackage[colorlinks=true, pdfstartview=FitV, linkcolor=blue, citecolor=blue, urlcolor=blue]{hyperref}

\textwidth=1.2\textwidth
\calclayout

%
%

\newtheorem{theorem}{Theorem}[section]
\newtheorem{lemma}[theorem]{Lemma}
\newtheorem{cor}[theorem]{Corollary}
\newtheorem{prop}[theorem]{Proposition}

\theoremstyle{definition}

\theoremstyle{remark}

\allowdisplaybreaks

\numberwithin{equation}{section}



\newcommand{\R}{\mathbb R}
\newcommand{\N}{\mathbb N}

\newcommand{\Z}{\mathbb Z}
\newcommand{\D}{\mathcal D}

\newcommand{\subRn}{{{\mathbb R}^n}}


\DeclareMathOperator*{\essinf}{ess\,inf}
\DeclareMathOperator*{\esssup}{ess\,sup}

\DeclareMathOperator{\card}{card}

\DeclareMathOperator{\light}{Light}
\DeclareMathOperator{\shade}{Shade}


\newcommand{\pp}{{p(\cdot)}}
\newcommand{\cpp}{{p'(\cdot)}}
\newcommand{\Lp}{L^{p(\cdot)}}
\newcommand{\Pp}{\mathcal P}
\newcommand{\qq}{{q(\cdot)}}

\newcommand{\Fpv}{{F^{s(\cdot)}_{p(\cdot), q(\cdot)}}}
\newcommand{\Fpdot}{{\dot{F}^{s(\cdot)}_{p(\cdot), q(\cdot)}}}
\newcommand{\Fpconst}{{\dot{F}^{s}_{p(\cdot), q(\cdot)}}}

\newcommand{\sss}{{s(\cdot)}}



\usepackage{color}

\newcommand{\Bk}{\color{black}}
\newcommand{\Rd}{\color{red}}


\def\Xint#1{\mathchoice
   {\XXint\displaystyle\textstyle{#1}}%
   {\XXint\textstyle\scriptstyle{#1}}%
   {\XXint\scriptstyle\scriptscriptstyle{#1}}%
   {\XXint\scriptscriptstyle\scriptscriptstyle{#1}}%
   \!\int}
\def\XXint#1#2#3{{\setbox0=\hbox{$#1{#2#3}{\int}$}
     \vcenter{\hbox{$#2#3$}}\kern-.5\wd0}}

\def\avgint{\Xint-}

\begin{document}

\title{Greedy Bases in variable Lebesgue spaces}

\author{David Cruz-Uribe, SFO}
\address{David Cruz-Uribe, SFO\\
Dept. of Mathematics \\ Trinity College \\
Hartford, CT 06106-3100, USA}
\email{david.cruzuribe@trincoll.edu}

\author{Eugenio Hern\'andez}

\address{Eugenio Hern\'andez \\
Departamento de Matem\'aticas \\
Universidad Aut\'onoma de Madrid \\
E-28049 Madrid, Spain}
\email{eugenio.hernandez@uam.es}

\author{Jos\'e Mar{\'\i}a Martell}
\address{Jos\'e Mar{\'\i}a Martell
\\
Instituto de Ciencias Matem\'aticas CSIC-UAM-UC3M-UCM
\\
Consejo Superior de Investigaciones Cient{\'\i}ficas
\\
C/ Nicol\'as Cabrera, 13-15
\\
E-28049 Madrid, Spain} \email{chema.martell@icmat.es}

\thanks{The first author is supported by the Stewart-Dorwart faculty
  development fund at Trinity College and NSF grant 1362425. The second and third authors are  supported in part  by MINECO Grant MTM2010-16518
  (Spain).  The third author has been also supported by  ICMAT Severo Ochoa project SEV-2011-0087 (Spain) and he also acknowledges that the research leading to these results has received funding from the European Research Council under the European Union's Seventh Framework Programme (FP7/2007-2013)/ ERC agreement no. 615112 HAPDEGMT}

\subjclass[2010]{41A17, 42B35, 42C40}

\keywords{Greedy algorithm, non-linear approximation, wavelets, variable Lebesgue spaces, Lebesgue type estimates}

\date{October 1, 2014}

\begin{abstract}
  We compute the right and left democracy functions of admissible
  wavelet bases in variable Lebesgue spaces defined on $\mathbb R^n$.
  As an application we give Lebesgue type inequalities for these wavelet
  bases.  We also show that our techniques can be easily modified to
  prove analogous results for weighted variable Lebesgue spaces and
  variable exponent Triebel-Lizorkin spaces.
\end{abstract}

\maketitle

\section{Introduction}

Let $X$ be an infinite dimensional Banach space with norm
$\|\cdot\|_X$ and let $B=\{b_j\}_{j=1}^\infty$ be a Schauder basis for
$X$: that is, if $x\in X$, then there exists a unique
sequence $\{\lambda_j\}$ such that
\begin{equation} \label{eqn:expansion}
x = \sum_{j=1}^\infty \lambda_j b_j.
\end{equation}
For each $N =1, 2, 3, \dots$  we define the best $N$-term approximation of $x\in X$ in terms of $B$ as follows:
$$
  \sigma_N(x) = \sigma_N(x; B) :=
  \inf_{y\in\Sigma_N} \|x-y\|_{X}\,
$$
where $\Sigma_N$ the set of all $y\in X$ with
at most $N$ non-zero coefficients in their basis
representation.

An important question in approximation theory is the construction of
efficient algorithms for $N$-term approximation.  One algorithm that has been extensively studied in recent years is the so called greedy algorithm.
Given $x\in X$ and the coefficients~\eqref{eqn:expansion},  reorder the basis elements so that

\[ \|\lambda_{j_1}b_{j_1}\|_X \geq \|\lambda_{j_2}b_{j_2}\|_X \geq \|\lambda_{j_2}b_{j_2}\|_X \ldots \]
(handling ties arbitrarily); we then define an $N$-term
approximation by the (non-linear) operator $G_N:X\to\Sigma_N$,

\[ G_N(x) = \sum_{k=1}^N \lambda_{j_k}b_{j_k}. \]
Clearly, $\sigma_N(x) \leq \|x-G_N(x)\|_X$. We say that a basis is
greedy if the opposite inequality holds up to a constant:  there
exists $C>1$ such that for all
$x\in X$ and $N>0$,

\[  \|x-G_N(x)\|_X \leq C\sigma_N(x).  \]

Konyagin and Temlyakov~\cite{MR1716087} characterized greedy bases as
those which are unconditional and democratic.  A basis is democratic if given any two index sets $\Gamma,\,\Gamma'$, $\card(\Gamma)=\card(\Gamma')$,
\[ \bigg\|\sum_{j\in\Gamma} \frac{b_j}{\|b_j\|_X}\bigg\|_X \approx
\bigg\|\sum_{j\in\Gamma'} \frac{b_j}{\|b_j\|_X}\bigg\|_X. \]

Wavelet systems form greedy bases in many function and distribution spaces.  For example, Temlyakov~\cite{MR1628182} proved that any wavelet basis $p$-equivalent to the Haar basis is a greedy basis in $L^p(\R^n)$.    However, wavelet bases are not greedy in other function spaces.  For example, it was shown in~\cite{MR2379116} that if $\Phi$ is a Young function such that the Orlicz space $L^\Phi$ is not $L^p$, $1<p<\infty$, then wavelet bases are not greedy because they are not democratic.   (Earlier,  Soardi~\cite{MR1443168} proved that wavelet bases are unconditional in $L^\Phi$.)

In this paper our goal is to extend this result to the variable
Lebesgue spaces and other related function spaces.  Intuitively,
given an exponent function $\pp$, the variable Lebesgue space $\Lp(\R^n)$ consists of
all functions $f$ such that
\[ \int_\subRn |f(x)|^{p(x)}\,dx < \infty.  \]
(See Section~\ref{section:variable} below for a precise definition.)  These spaces are a generalization of the classical $L^p$ spaces, and have applications in the study of PDEs and variational integrals with non-standard growth conditions.  For the history and properties of these spaces we refer to~\cite{cruz-fiorenza-book,diening-harjulehto-hasto-ruzicka2010}.

Many wavelet bases form unconditional bases on the variable Lebesgue
spaces: see Theorem~\ref{thm:var-wavelets} below.  However, unless
$\pp=p$ is constant, they are never democratic in
  $\Lp(\R^n)$.   When $n=1$ this was proved by Kopaliani~\cite{MR2391623}
  and for general $n$ it follows from Theorem \ref{thm:democracy}
  below. In this paper we quantify the failure of
democracy by computing precisely the right and left
  democracy functions of admissible wavelet bases in $\Lp(\R^n)$. For
  a basis $B=\{b_j\}_{j=1}^\infty$ in a Banach space $X$, we define
the right and left democracy functions of $X$ (see
also~\cite{MR1998906,MR2033015}) as:
\[ h_r(N) = \sup_{\card(\Gamma)=N}
\bigg\|\sum_{j\in\Gamma} \frac{b_j}{\|b_j\|_X}\bigg\|_X, \qquad
 h_l(N) = \inf_{\card(\Gamma)=N}
\bigg\|\sum_{j\in\Gamma} \frac{b_j}{\|b_j\|_X}\bigg\|_X \]

To state our main result, given a variable exponent $\pp$ define $ p_+
= \esssup_{x\in \mathbb R^n} p(x)$ and $p_- = \essinf_{x\in \mathbb
  R^n} p(x).$

\begin{theorem} \label{thm:democracy}
  Given an exponent function $\pp$, suppose $1<p_-\leq p_+<\infty$ and
  the Hardy-Littlewood maximal operator is bounded on $\Lp(\R^n)$.
  Let $\Psi$ be an admissible orthonormal wavelet family (see below for the precise definition).
The right and left democracy functions of $\Psi$ in
    $\Lp(\R^n)$ satisfy
\[ h_r(N) \approx N^{1/p_-}, \qquad h_l(N) \approx N^{1/p_+}\,, \quad N = 1, 2, 3, \dots  \]
\end{theorem}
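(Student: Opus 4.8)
The plan is to estimate, for a fixed index set $\Gamma$ with $\card(\Gamma) = N$, the quantity $\big\|\sum_{j\in\Gamma} b_j/\|b_j\|_{\Lp}\big\|_{\Lp}$ from above and below. The key preliminary step is to understand the normalization: if the wavelets are indexed by dyadic cubes $Q$ (with $b_Q = \psi^\ell_Q$ supported essentially on $Q$), then by the boundedness of the maximal operator $M$ on $\Lp$ and on its associate space, one has the standard equivalence $\|\psi^\ell_Q\|_{\Lp} \approx \|\chi_Q\|_{\Lp}/|Q|^{1/2}$, so that $b_Q/\|b_Q\|_{\Lp} \approx |Q|^{1/2}\chi_Q/\|\chi_Q\|_{\Lp}\cdot(\text{bounded factor})$. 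Next I would invoke the wavelet characterization of $\Lp$ (the variable-exponent Triebel–Lizorkin / square-function description that underlies Theorem~\ref{thm:var-wavelets}), which reduces $\big\|\sum_{j\in\Gamma} b_j/\|b_j\|_{\Lp}\big\|_{\Lp}$ to the norm of the discrete square function $\big(\sum_{Q\in\Gamma} \big(\chi_Q/\|\chi_Q\|_{\Lp}\big)^2\big)^{1/2}$ in $\Lp$, up to constants depending only on $p_\pm$ and the admissibility constants of $\Psi$.

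For the \emph{upper} bound $h_r(N)\lesssim N^{1/p_-}$: the worst case is when the $N$ cubes are pairwise disjoint and small enough that $\|\chi_Q\|_{\Lp}$ behaves like $|Q|^{1/p_-}$ (the local exponent is close to $p_-$). For disjoint cubes the square function collapses to $\sum_Q \chi_Q/\|\chi_Q\|_{\Lp}$, and on each $Q$ this equals $1/\|\chi_Q\|_{\Lp}$; computing the modular $\int \big(\sum_Q \chi_Q/\|\chi_Q\|_{\Lp}\big)^{p(x)}\,dx = \sum_Q \int_Q \|\chi_Q\|_{\Lp}^{-p(x)}\,dx$ and bounding each term by comparing $\|\chi_Q\|_{\Lp}^{-p(x)} \le \|\chi_Q\|_{\Lp}^{-p_-}$ (when $\|\chi_Q\|_{\Lp}\le 1$) leads, via the relation between modular and norm, to a bound of order $N^{1/p_-}$; one must also check the overlapping case, where disjointification and the triangle/unconditionality inequality keep the same order. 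The matching \emph{lower} bound for $h_r$ is obtained by exhibiting an explicit family: take $N$ disjoint dyadic cubes all of side length $2^{-k}$ with $k\to\infty$, chosen where $p$ is within $\varepsilon$ of $p_-$, and compute the norm of $\sum_Q \chi_Q/\|\chi_Q\|_{\Lp}$ directly from the modular, getting $\gtrsim N^{1/p_-}(1-o(1))$; since $h_r(N)$ is the supremum, a careful limiting argument (or using that $p$ attains values arbitrarily close to $p_-$ on sets of positive measure) gives $h_r(N)\gtrsim N^{1/p_-}$. The arguments for $h_l$ are dual: the infimum over $\Gamma$ is realized (up to constants) by cubes placed where $p$ is close to $p_+$ and $\|\chi_Q\|_{\Lp}\approx |Q|^{1/p_+}$, and the same modular computation with $p_+$ in place of $p_-$ gives $h_l(N)\approx N^{1/p_+}$; here one uses that for \emph{any} $\Gamma$ the square function norm is bounded below by the contribution of a single well-placed scale, so no configuration can do better than $N^{1/p_+}$.

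The main obstacle, and where the real work lies, is making the heuristic "$\|\chi_Q\|_{\Lp}\approx |Q|^{1/p_-}$ for small cubes near where $p\approx p_-$" rigorous and uniform enough to extract the sharp exponents $1/p_-$ and $1/p_+$ rather than just $1/p_+\le(\text{exponent})\le 1/p_-$. This requires: (i) a precise two-sided estimate for $\|\chi_Q\|_{\Lp}$ in terms of $|Q|$ and the oscillation of $p$ on $Q$ — essentially the content of the $\log$-Hölder-type local estimates, though here we only need the consequence that $\inf$ and $\sup$ of the relevant powers are controlled — and (ii) a covering/selection argument producing, for each $N$, a configuration of $N$ cubes whose combined norm is genuinely close to the extremal rate, using the definition of $p_-$ (resp. $p_+$) as an essential infimum (resp. supremum) so that the sets $\{x: p(x) < p_- + \varepsilon\}$ have positive measure and contain small dyadic cubes. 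Once the normalized characteristic functions are pinned down, the upper bounds follow from the embedding $\ell^{p_-}\hookrightarrow \Lp$-type modular estimates and the lower bounds from explicit test configurations; the unconditionality from Theorem~\ref{thm:var-wavelets} is what lets us pass freely between the wavelet sum and the square function throughout.
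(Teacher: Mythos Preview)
Your overall architecture matches the paper's: reduce to the square function
\[
S_\Gamma(x)=\Big(\sum_{Q\in\Gamma}\frac{\chi_Q(x)}{\|\chi_Q\|_\pp^2}\Big)^{1/2}
\]
via Theorem~\ref{thm:var-wavelets}, handle the disjoint case by a modular computation, and build extremal families near points where $p\approx p_\pm$ to show sharpness. That part is fine and is essentially what the paper does in Propositions~\ref{prop:disjoint} and~\ref{pro:sharp}.

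The real gap is your treatment of the \emph{overlapping} case, which you dismiss with ``disjointification and the triangle/unconditionality inequality keep the same order.'' This is the heart of the matter and neither of those tools will do it. For the bound $\|S_\Gamma\|_\pp\lesssim N^{1/p_-}$ valid for \emph{every} $\Gamma$, you must control $S_\Gamma(x)$ at points lying in many nested cubes. The paper's mechanism is a pointwise linearization: for each $x$ one shows $S_\Gamma(x)\approx \|\chi_{Q_x}\|_\pp^{-1}$ where $Q_x$ is the smallest cube of $\Gamma$ containing $x$. The upper half of this requires a geometric decay estimate, namely $\|\chi_{Q_0}\|_\pp/\|\chi_{Q_j}\|_\pp\lesssim 2^{-jn\delta}$ for the dyadic ancestors $Q_j$ of $Q_0$ (Lemma~\ref{lemma:chema}), which the paper obtains from a Rubio de Francia iteration on $L^{\cpp}$ and the $A_\infty$ property of the resulting weight. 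Nothing in your sketch supplies this decay. Once the linearization is in hand, the paper rewrites $S_\Gamma$ as a disjoint sum over the ``lighted'' parts $\light(Q)$ of the minimal cubes and runs the modular argument there.

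The same issue bites for the universal lower bound $h_l(N)\gtrsim N^{1/p_+}$: your phrase ``bounded below by the contribution of a single well-placed scale'' does not explain why an \emph{arbitrary} $\Gamma$ (possibly a long nested chain with $\card(\Gamma_{\min})=1$) still produces norm $\gtrsim N^{1/p_+}$. The paper needs the combinatorial fact $\card(\Gamma_L)\ge \tfrac{2^n-1}{2^n}N$ for the lighted cubes, together with Lemma~\ref{lemma:dcu} to replace $\|\chi_Q\|_\pp$ by $\|\chi_{\light(Q)}\|_\pp$, before the modular computation goes through.

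A smaller point: in your modular computation for disjoint cubes you propose bounding $\|\chi_Q\|_\pp^{-p(x)}\le\|\chi_Q\|_\pp^{-p_-}$. This leaves you with $\sum_Q|Q|\,\|\chi_Q\|_\pp^{-p_-}$, which is not uniformly controlled. The clean route (and the paper's) is to insert the factor $N^{-1/p_-}$, use $N^{-p(x)/p_-}\le N^{-1}$, and invoke the modular identity $\int_Q\|\chi_Q\|_\pp^{-p(x)}\,dx=1$ directly.
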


As an immediate corollary to Theorem~\ref{thm:democracy} we get a
Lebesgue-type estimate for wavelet bases on variable Lebesgue spaces.
The proof follows from Wojtaszczyk~\cite[Theorem~4]{Wojtaszczyk2000293}.

\begin{cor} \label{cor:lebesgue}
With the hypotheses of Theorem~\ref{thm:democracy}, we have that for all $f\in \Lp(\R^n)$,
\[ \|f-G_N(f)\|_{\Lp(\R^n)} \leq CN^{\frac{1}{p_-} - \frac{1}{p_+}} \sigma_N(f,\Psi), \]
and this estimate is the best possible.
\end{cor}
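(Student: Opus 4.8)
The plan is to obtain the inequality as a direct consequence of Theorem~\ref{thm:democracy} together with Wojtaszczyk's abstract Lebesgue-type bound, and then to prove sharpness by an explicit two-scale example. First I would recall that, under the present hypotheses, $\Psi$ is an unconditional basis of $\Lp(\R^n)$ (Theorem~\ref{thm:var-wavelets}), hence in particular quasi-greedy; Wojtaszczyk's estimate \cite[Theorem~4]{Wojtaszczyk2000293} then gives
\[
\|f-G_N(f)\|_{\Lp(\R^n)}\ \le\ C\,\frac{h_r(N)}{h_l(N)}\,\sigma_N(f,\Psi),\qquad N=1,2,3,\dots
\]
(if the precise statement instead carries $\sup_{1\le k\le N}h_r(k)/h_l(k)$ on the right, this is immaterial here, since by Theorem~\ref{thm:democracy} the ratio $h_r(k)/h_l(k)\approx k^{\frac1{p_-}-\frac1{p_+}}$ is nondecreasing in $k$). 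Substituting $h_r(N)\approx N^{1/p_-}$ and $h_l(N)\approx N^{1/p_+}$ then yields the claimed bound with exponent $\frac1{p_-}-\frac1{p_+}$.

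For sharpness I would argue as follows. Fix $N$ and, using the extremal configurations built in the proof of Theorem~\ref{thm:democracy}, choose disjoint index sets $\Gamma_1,\Gamma_2$, each of cardinality $N$, such that (writing $e_\Gamma:=\sum_{j\in\Gamma}b_j/\|b_j\|_{\Lp(\R^n)}$ for $\{b_j\}$ an enumeration of $\Psi$) one has $\|e_{\Gamma_1}\|_{\Lp(\R^n)}\approx N^{1/p_+}$, nearly attaining $h_l(N)$, and $\|e_{\Gamma_2}\|_{\Lp(\R^n)}\approx N^{1/p_-}$, nearly attaining $h_r(N)$. Then I would set $f:=2\,e_{\Gamma_1}+e_{\Gamma_2}$: in the wavelet expansion of $f$ the terms indexed by $\Gamma_1$ satisfy $\|\lambda_j b_j\|_{\Lp(\R^n)}=2$ while those indexed by $\Gamma_2$ satisfy $\|\lambda_j b_j\|_{\Lp(\R^n)}=1$, so the greedy algorithm selects exactly the $N$ terms coming from $\Gamma_1$, giving $G_N(f)=2\,e_{\Gamma_1}$ and hence $\|f-G_N(f)\|_{\Lp(\R^n)}=\|e_{\Gamma_2}\|_{\Lp(\R^n)}\gtrsim N^{1/p_-}$. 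On the other hand $e_{\Gamma_2}\in\Sigma_N$, so $\sigma_N(f,\Psi)\le\|f-e_{\Gamma_2}\|_{\Lp(\R^n)}=2\,\|e_{\Gamma_1}\|_{\Lp(\R^n)}\lesssim N^{1/p_+}$. Combining these, $\|f-G_N(f)\|_{\Lp(\R^n)}\gtrsim N^{\frac1{p_-}-\frac1{p_+}}\,\sigma_N(f,\Psi)$ for every $N$, so the order $N^{\frac1{p_-}-\frac1{p_+}}$ cannot be lowered.

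The one step that is not purely formal, and which I expect to be the main obstacle, is producing $\Gamma_1$ and $\Gamma_2$ \emph{simultaneously} disjoint and essentially extremal for $h_l(N)$ and $h_r(N)$ respectively. This should fall out of the proof of Theorem~\ref{thm:democracy}: the extremal families there are made of wavelets which, at appropriately chosen scales, are concentrated in regions where $p(\cdot)$ is close to $p_+$ (for the left function) or to $p_-$ (for the right function), and since at every scale infinitely many such wavelets are available, disjointness is secured by a translation. Finally, when $p_-=p_+$ the exponent is $0$ and the corollary merely restates the greedy property of $\Psi$, consistent with $\pp$ being constant in that case.
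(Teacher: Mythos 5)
Your proof is correct, and the first half coincides with what the paper does: the paper's entire proof is a one-line appeal to Wojtaszczyk \cite[Theorem~4]{Wojtaszczyk2000293}, which gives both the inequality (via the ratio of democracy functions for a quasi-greedy/unconditional basis) and its optimality. Your observation that the ratio $h_r(k)/h_l(k)\approx k^{1/p_- - 1/p_+}$ is nondecreasing, so that the various forms of Wojtaszczyk's Lebesgue constant collapse to the same power of $N$, is exactly the point being tacitly used.

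Where you genuinely depart from the paper is the sharpness argument. The paper outsources this to Wojtaszczyk as well; you instead give the classical ``two-scale'' construction $f=2e_{\Gamma_1}+e_{\Gamma_2}$ with $\Gamma_1$ nearly minimizing and $\Gamma_2$ nearly maximizing, so that $G_N$ removes the wrong block. This is a more self-contained and illuminating route, and it is correct in outline, but one detail needs fixing. You say disjointness of $\Gamma_1$ and $\Gamma_2$ ``is secured by a translation'': that is not the right mechanism, since wavelet indices are dyadic cubes and cannot be translated freely without leaving the family. What actually secures disjointness is the construction in Proposition~\ref{pro:sharp}: the near-extremal cubes for $h_l$ are obtained by shrinking dyadic cubes around Lebesgue points of $\chi_{H_\epsilon}$ and those for $h_r$ around Lebesgue points of $\chi_{G_\epsilon}$, and all $2N$ cubes can be taken with arbitrarily small side-length. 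Choosing $2N$ distinct Lebesgue points ($N$ in each of the two disjoint level sets $G_\epsilon$ and $H_{\epsilon/2}$, for $\epsilon<(p_+-p_-)/2$) and then shrinking all cubes until they are pairwise disjoint does the job, and the estimates in Proposition~\ref{pro:sharp} depend only on each cube individually, not on their relative positions. With that replacement your argument is complete, and it offers an explicit extremal example the paper does not spell out.
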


\medskip

Our arguments readily adapt to prove analogs of
Theorem~\ref{thm:democracy} (and so also of
Corollary~\ref{cor:lebesgue}) for other variable exponent spaces, in
particular for the weighted variable Lebesgue spaces~\cite{MR2927495} and for variable
exponent Triebel-Lizorkin spaces~\cite{MR2498558,MR2499334,MR2767169,MR2431378}.  The statement of
these results require a number of preliminary definitions, and so we
defer these until after the proof of Theorem~\ref{thm:democracy}.

The remainder of this paper is organized as follows.  In
Section~\ref{section:variable} we give a number of preliminary results
regarding the variable Lebesgue spaces that are needed in our main
proof.  In Section~\ref{section:proof-democracy} we prove
Theorem~\ref{thm:democracy}.  In
Sections~\ref{section:wtd-democracy} and~\ref{section:var-TLS} we state and prove the
corresponding result for weighted variable Lebesgue spaces and
variable exponent Triebel-Lizorkin spaces.
Throughout this paper,   our notation will be standard or
defined as needed.  If we write $A \lesssim B$, we mean that $A\leq
CB$, where the constant $C$ depends only on the dimension $n$ and the
underlying exponent function $\pp$.  If $A\lesssim B$ and $B\lesssim
A$, we write $A\approx B$.  The letter $C$ will denote a constant that
may change at each appearance.

\section{Variable Lebesgue spaces}
\label{section:variable}

\subsection*{Basic properties}
We begin with some basic definitions and results about variable Lebesgue spaces.  For proofs and further information, see~\cite{cruz-fiorenza-book,diening-harjulehto-hasto-ruzicka2010, MR1866056,MR1134951}.

Let $\Pp=\Pp(\R^n)$ be the collection of exponent functions: that is,
all measurable functions $\pp : \R^n \rightarrow [1,\infty)$.  We
define the variable Lebesgue space $\Lp=\Lp(\R^n)$ to be the family of
all measurable functions $f$ such that for some $\lambda>0$,
\[  \int_{\R^n}\left(\frac{|f(x)|}{\lambda}\right)^{p(x)}\,dx < \infty. \]
This becomes a Banach function space with respect to the Luxemburg norm
\[ \|f\|_\pp =\inf\left\{ \lambda > 0 :
\int_{\R^n} \left(\frac{|f(x)|}{\lambda}\right)^{p(x)}\,dx \leq 1 \right\}. \]
When $\pp=p$ is constant, then $\Lp=L^p$ with equality of norms.

To measure the oscillation of $\pp$, given any set $E\subset \R^n$, we define
\[ p_+(E) = \esssup_{x\in E} p(x),  \qquad p_-(E) = \essinf_{x\in E} p(x). \]
For brevity we write $p_+=p_+(\R^n)$, $p_-=p_-(\R^n)$.

When $p_+<\infty$, we have the following useful integral estimate:
$\|f\|_\pp$ is the unique value such that
\begin{equation} \label{eqn:useful}
\int_{\R^n}\left(\frac{|f(x)|}{\|f\|_\pp}\right)^{p(x)}\,dx = 1.
\end{equation}

Given an exponent $\pp$, $1<p_-\leq p_+<\infty$ we define the
conjugate exponent $\cpp$ pointwise by
\[ \frac{1}{p(x)} + \frac{1}{p'(x)} = 1. \]
Then functions $f\in \Lp(\R^n)$  and $g\in L^\cpp(\R^n)$ satisfy H\"older's inequality:
\[ \int_{\R^n} |f(x)g(x)|\,dx \leq 2\|f\|_\pp\|g\|_\cpp; \]
moreover, $L^\cpp(\R^n)$ is the dual space of $\Lp(\R^n)$ and
\[ \|f\|_\pp  \approx \sup_{\|g\|_\cpp=1} \Big|\int_{\R^n} f(x)g(x)\,dx\Big|. \]

\bigskip

\subsection*{The Hardy-Littlewood maximal operator}
To do harmonic analysis on variable Lebesgue spaces, it is necessary
to assume some regularity on the exponent $\pp$.  One approach (taken
from~\cite{cruz-fiorenza-book}) is to express this regularity in terms
of the boundedness of the Hardy-Littlewood maximal
operator.    Given a locally integrable function $f$, define $Mf$ by
\[ Mf(x) =\sup_{Q\ni x} \avgint_Q |f(y)|\,dy, \]
where the supremum is taken over all cubes $Q$ with sides parallel to
the coordinate axes.  If the maximal operator is bounded on $\Lp$ we
will write $\pp\in M\Pp$.

The following are basic properties of the maximal operator on variable
Lebesgue spaces.  For complete information,
see~\cite{cruz-fiorenza-book,diening-harjulehto-hasto-ruzicka2010}.
By Chebyschev's inequality, if $M$ is bounded, then it also satisfies
the weak-type inequality
\begin{equation} \label{thm:weaktype}
 \|t\chi_{\{x : Mf(x)> t \}}\|_\pp \leq C\|f\|_\pp, \qquad t>0.
\end{equation}
A necessary condition for $\pp\in M\Pp$ is  that $p_->1$.  An
important sufficient condition is that $\pp$ is log-H\"older
continuous locally:  there exists $C_0>0$ such that
\[ |p(x)-p(y)| \leq \frac{C_0}{-\log(|x-y|)}, \qquad |x-y|<1/2; \]
and log-H\"older continuous at infinity:  there exists $p_\infty$ and
$C_\infty>0$ such that
\[ |p(x)-p_\infty| \leq \frac{C_\infty}{\log(e+|x|)}. \]
These conditions are not necessary for the maximal operator to be
bounded on $\Lp$, but they are sharp in the sense that they are best
possible pointwise continuity conditions guaranteeing that $M$ is
bounded on $\Lp$.

\subsection*{Weighted norm inequalities}
There is a close connection between the variable Lebesgue spaces and
the theory of weighted norm inequalities.  Here we give some basic
information on weights; for more information,
see~\cite{cruz-fiorenza-book,duoandikoetxea01,garcia-cuerva-rubiodefrancia85}.

By a weight we mean a non-negative, locally integrable function.   For $1<p<\infty$, we say that a weight $w$ is in the Muckenhoupt class $A_p$ if
$$ [w]_{A_p} = \sup_Q \left(\avgint_Q w(x)\,dx\right)
\left(\avgint_Q w(x)^{1-p'}\,dx\right)^{p-1} < \infty,  $$
where $\avgint_Q g(x)\,dx = |Q|^{-1}\int_Q g(x)\,dx .$
When $p=1$, we say that $w\in A_1$ if
\[ [w]_{A_1} = \left(\avgint_{Q} w(y)\,dy\right)  \esssup_{x\in Q} w(x)^{-1}< \infty.  \]
Equivalently, $w\in A_1$ if $Mw(x) \leq [w]_{A_1} w(x)$ almost
everywhere, where $M$ is the Hardy-Littlewood maximal operator.
Define $A_\infty = \bigcup_{p\geq 1} A_p$.  If $w\in A_\infty$, then
there exist constants $C,\,\delta>0$ such that for every cube $Q$ and
$E\subset Q$,
\[  \frac{w(E)}{w(Q)} \leq C\left(\frac{|E|}{|Q|}\right)^\delta, \]
where $w(E) = \int_E w(x)dx$.

\subsection*{Wavelets}
To state our results precisely we need a few definitions on wavelets; for complete information we refer the reader to~\cite{hernandez-weiss96}.   Given the collection of
 dyadic cubes
\[ \D=\{Q_{j,k} = 2^{-j}([0,1)^n + k) :
j\in\Z,\, k\in \Z^n\},  \]
the functions
$\Psi=\{\psi^1, \dots, \psi^L\} \subset L^2(\R^n)$ form an orthonormal
wavelet family if
\[ \{ \psi_Q^l \} = \big\{\psi_{Q_{j,k}}^l(x)= 2^{j\,n/2} \psi^l (2^j\,x - k) : j\in \Z,\, k\in
 \Z^n,\, 1\le l\le L \big\}
\]
is an orthonormal basis of $L^2(\R^n)$.

Define the square function
$$
\mathcal{W}_\Psi f =
\bigg( \sum_{l=1}^L \sum_{Q\in\D}
|\langle f,\psi^l_{Q}\rangle|^2\,|Q|^{-1}\,\chi_{Q} \bigg)^{1/2}.
$$
We will say that a wavelet family  $\Psi$ is
 admissible if for
$1<p<\infty$  and every $w\in A_{p}$,
\[  \|\mathcal{W}_\Psi f\|_{L^{p}(w)} \approx
\|f\|_{L^{p}(w)}.
\]
Admissible wavelets on the real line include the Haar system
\cite{kazarian82}, spline wavelets \cite{garcia-cuerva-kazarian95},
the compactly supported wavelets of
Daubechies~\cite{daubechies1992ten}, Lemari\'e-Meyer wavelets
\cite{lemarie94,wu92}, and smooth wavelets in the class
$\mathcal{R}^1$
\cite{garcia-cuerva-martell01b,hernandez-weiss96}.

An important consequence of the boundedness of the maximal operator on
$\Lp$ is that in this case wavelets form an unconditional basis.

\begin{theorem} \label{thm:var-wavelets}
Given $\pp$, suppose $1<p_-\leq p_+<\infty$ and $\pp \in M\Pp$.  If $\Psi$ is an admissible orthonormal wavelet family, then it is an unconditional basis for $\Lp(\R^n)$ and
\[  \|\mathcal{W}_\Psi f\|_{\pp} \approx
\|f\|_{\pp}.
\]
\end{theorem}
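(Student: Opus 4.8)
The plan is to derive both assertions from the admissibility hypothesis by means of Rubio de Francia extrapolation in the scale of variable Lebesgue spaces. Admissibility says that for every $p\in(1,\infty)$ and every $w\in A_p$ one has $\|\mathcal{W}_\Psi f\|_{L^p(w)}\approx\|f\|_{L^p(w)}$; equivalently, both pairs $(\mathcal{W}_\Psi f,f)$ and $(f,\mathcal{W}_\Psi f)$ satisfy an $A_p$-weighted $L^p$ bound for every $p\in(1,\infty)$. By Diening's theorem, the hypothesis $\pp\in M\Pp$ together with $1<p_-\le p_+<\infty$ also yields $\cpp\in M\Pp$, so the extrapolation theorem for variable Lebesgue spaces (see~\cite{cruz-fiorenza-book}) applies to each pair and gives
\[
\|\mathcal{W}_\Psi f\|_\pp\lesssim\|f\|_\pp,\qquad \|f\|_\pp\lesssim\|\mathcal{W}_\Psi f\|_\pp,
\]
hence the claimed equivalence $\|\mathcal{W}_\Psi f\|_\pp\approx\|f\|_\pp$. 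To make this rigorous I would apply the weighted inequalities only to finite truncations: fixing a finite set $\mathcal D_N\subset\D$ and letting $f_N=\sum_{l=1}^L\sum_{Q\in\mathcal D_N}\langle f,\psi_Q^l\rangle\psi_Q^l$, orthonormality gives $\mathcal{W}_\Psi f_N=\big(\sum_{l=1}^L\sum_{Q\in\mathcal D_N}|\langle f,\psi_Q^l\rangle|^2|Q|^{-1}\chi_Q\big)^{1/2}\le\mathcal{W}_\Psi f$, so all quantities are a priori finite; one then passes to the limit using monotone convergence on the square-function side and a Fatou-type inequality for $\|\cdot\|_\pp$ together with~\eqref{eqn:useful} on the $f_N\to f$ side.

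Next I would use this equivalence to obtain unconditionality. Let $\varepsilon=\{\varepsilon_Q^l\}$ be any sequence with $|\varepsilon_Q^l|\le 1$ and put $T_\varepsilon f=\sum_{l=1}^L\sum_{Q\in\D}\varepsilon_Q^l\langle f,\psi_Q^l\rangle\psi_Q^l$, defined first on finite linear combinations of the wavelets. Pointwise $\mathcal{W}_\Psi(T_\varepsilon f)\le\mathcal{W}_\Psi f$, so by the equivalence just proved $\|T_\varepsilon f\|_\pp\lesssim\|\mathcal{W}_\Psi f\|_\pp\approx\|f\|_\pp$ with a constant independent of $\varepsilon$ and of the particular finite sum. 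A density argument extends $T_\varepsilon$ to a bounded operator on $\Lp(\R^n)$, uniformly in $\varepsilon$; in particular, choosing $\varepsilon$ to be the indicator sequences of finite index sets gives uniform boundedness of the partial-sum projections. This is precisely the assertion that $\{\psi_Q^l\}$ is an unconditional basic sequence in $\Lp(\R^n)$.

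It remains to check completeness, i.e.\ that $\{\psi_Q^l\}$ is actually a basis of $\Lp(\R^n)$. Since $p_+<\infty$, the Schwartz class---hence also $L^2(\R^n)\cap\Lp(\R^n)$---is dense in $\Lp(\R^n)$. For $f\in L^2\cap\Lp$ the wavelet expansion converges to $f$ in $L^2$, while the equivalence $\|\mathcal{W}_\Psi g\|_\pp\approx\|g\|_\pp$ shows its partial sums are Cauchy in $\Lp$ (the tails are dominated in $\Lp$ by $\mathcal{W}_\Psi f\in\Lp$), so the expansion converges to $f$ in $\Lp$ as well; thus finite wavelet combinations are dense in $\Lp$. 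Combined with the uniform boundedness of the partial-sum projections from the previous step, this gives the Schauder basis property, and the previous step upgrades it to an unconditional basis. The main obstacle is the extrapolation step: one must be careful to apply the weighted bounds only to genuinely finite truncations so that everything is finite, to justify the limiting arguments, and to verify that $\pp\in M\Pp$ (together, via Diening's duality, with $\cpp\in M\Pp$) is exactly what is needed to run variable-exponent extrapolation in both directions.
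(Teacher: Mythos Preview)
Your proposal is correct and follows exactly the approach the paper indicates: the paper does not prove Theorem~\ref{thm:var-wavelets} in detail but cites \cite[Theorem~4.27]{cruz-martell-perezBook}, noting that the proof there proceeds by Rubio de Francia extrapolation and that the argument goes through under the weaker hypothesis $\pp\in M\Pp$ (via \cite[Corollary~5.32]{cruz-fiorenza-book}), which is precisely the route you outline. Your additional care with finite truncations, the use of Diening's duality $\pp\in M\Pp\Rightarrow\cpp\in M\Pp$, and the density/completeness argument are the natural details one fills in when writing this out, and they are handled correctly.
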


Theorem~\ref{thm:var-wavelets} was proved
in~\cite[Theorem~4.27]{cruz-martell-perezBook} using the theory of
Rubio de Francia extrapolation.  The result is stated with the stronger
hypothesis that $\pp$ is log-H\"older continuous, but the
extrapolation argument given there works with the weaker assumptions
used here (see~\cite[Corollary~5.32]{cruz-fiorenza-book}).  This
result was also proved by Izuki~\cite{Izuki08} and by Kopaliani~\cite{MR2391623} on the real line.

\section{Proof of Theorem~\ref{thm:democracy}}
\label{section:proof-democracy}

In this section we give the proof of Theorem~\ref{thm:democracy}.  In
order to avoid repeating details in the subsequent sections, we have
written the proof in terms of a series of lemmas and propositions;
this will allow us to prove our other results by indicating where this
proof must be modified.

\begin{lemma}\label{lemma:diening}
Given an exponent function $\pp \in \Pp(\R^n)$ such that $1<p_-\le p_+<\infty$, then for every cube $Q$,
\[ |Q|^{\frac{1}{p_Q}} \leq 2 \|\chi_Q\|_\pp, \]
where
\[ \frac{1}{p_Q} = \avgint_Q \frac{1}{p(x)}\,dx. \]
\end{lemma}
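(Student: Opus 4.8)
The plan is to prove the lower bound on $\|\chi_Q\|_\pp$ by duality: I will test $\chi_Q$ against a suitable function $g \in L^\cpp(\R^n)$ using the H\"older inequality for variable Lebesgue spaces recorded above. Since $1 < p_- \le p_+ < \infty$, the conjugate exponent $\cpp$ is well defined with $1 < p'_- \le p'_+ < \infty$, so $L^\cpp(\R^n)$ is available. The test function I would use is
\[
g(x) = |Q|^{-1/p'(x)}\,\chi_Q(x);
\]
note $g$ is bounded with support of finite measure, hence genuinely lies in $L^\cpp(\R^n)$, so the pairing below is legitimate.

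First I would compute the modular of $g$. For every $x \in Q$ we have $\big(|Q|^{-1/p'(x)}\big)^{p'(x)} = |Q|^{-1}$, so
\[
\int_{\R^n}|g(x)|^{p'(x)}\,dx = \int_Q |Q|^{-1}\,dx = 1,
\]
and therefore $\|g\|_\cpp \le 1$. Next I would rewrite $g$ using $\tfrac{1}{p'(x)} = 1 - \tfrac{1}{p(x)}$, which gives $g(x) = |Q|^{-1}\,|Q|^{1/p(x)}\,\chi_Q(x)$ and hence
\[
\int_{\R^n} g(x)\,dx = \frac{1}{|Q|}\int_Q |Q|^{1/p(x)}\,dx.
\]
Applying Jensen's inequality with the probability measure $\tfrac{dx}{|Q|}$ on $Q$ and the convex function $t \mapsto |Q|^{t} = e^{t\log|Q|}$ (which is convex whether $|Q|$ is above or below $1$) to the function $x \mapsto \tfrac{1}{p(x)}$ yields
\[
\frac{1}{|Q|}\int_Q |Q|^{1/p(x)}\,dx \;\ge\; |Q|^{\frac{1}{|Q|}\int_Q \frac{dx}{p(x)}} \;=\; |Q|^{1/p_Q}.
\]
Combining these with H\"older's inequality (including its constant $2$) gives
\[
|Q|^{1/p_Q} \;\le\; \int_{\R^n}|\chi_Q(x)\,g(x)|\,dx \;\le\; 2\,\|\chi_Q\|_\pp\,\|g\|_\cpp \;\le\; 2\,\|\chi_Q\|_\pp,
\]
which is the asserted inequality.

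The only real decision in the argument is the choice of $g$: the exponent $-1/p'(x)$ is tuned precisely so that the modular of $g$ collapses to the constant $|Q|^{-1}$ and integrates to $1$, while the identity $1/p'(x) = 1 - 1/p(x)$ turns $|Q|\,g(x)$ into $|Q|^{1/p(x)}\chi_Q$, whose average over $Q$ is bounded below by $|Q|^{1/p_Q}$ by convexity. I do not expect a serious obstacle here; the factor $2$ in the statement is nothing more than the constant in the H\"older inequality for $\Lp$, and no case distinction on the size of $|Q|$ is needed. The one point requiring a word of care is simply checking that $g$ belongs to $L^\cpp(\R^n)$ so that the duality pairing and H\"older estimate apply.
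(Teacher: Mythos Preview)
Your proof is correct and essentially identical to the paper's own argument: the paper also tests $\chi_Q$ against $g(x)=|Q|^{-1/p'(x)}\chi_Q(x)$, uses Jensen's inequality to bound $\int_Q |Q|^{-1/p'(x)}\,dx$ below by $|Q|^{1/p_Q}$, and concludes via H\"older's inequality together with $\|g\|_{\cpp}=1$ (which follows from~\eqref{eqn:useful}). The only cosmetic difference is that the paper phrases the Jensen step as the arithmetic--geometric mean inequality applied to $(1/|Q|)^{1/p'(\cdot)}$, whereas you apply it directly to the convex map $t\mapsto |Q|^t$; these are the same inequality.
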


When $\pp\in M\Pp$, this inequality is actually an equivalence:
see~\cite{diening-harjulehto-hasto-ruzicka2010}.  For our purposes we
only need this weaker result and so we include the short proof.

\begin{proof}
Fix a cube $Q$.  If we define
\[ \frac{1}{p_Q'} = \avgint_Q \frac{1}{p'(x)}\,dx, \]
then $1/p_Q+1/p_Q'=1$.  By Jensen's inequality,
\[ \left(\frac{1}{|Q|}\right)^\frac{1}{p_Q'}
= \exp\bigg( \avgint_Q \log\bigg[
\left(\frac{1}{|Q|}\right)^\frac{1}{\cpp}\bigg]\,dx\bigg)
\leq \avgint_Q \left(\frac{1}{|Q|}\right)^\frac{1}{\cpp}\,dx. \]
But then by H\"older's inequality in the scale of variable Lebesgue
spaces,
\[ |Q|^{\frac{1}{p_Q}} \Rd =\Bk
|Q|\left(\frac{1}{|Q|}\right)^\frac{1}{p_Q'}
\leq |Q|\avgint_Q \left(\frac{1}{|Q|}\right)^\frac{1}{\cpp}\,dx
\leq 2\|\chi_Q\|_\pp\;\||Q|^{-1/\cpp}\chi_Q\|_\cpp. \]
To complete the proof, note that
\[ \int_Q \big( |Q|^{-1/p'(x)}\big)^{p'(x)}\,dx = 1, \]
and so by~\eqref{eqn:useful}, $\||Q|^{-1/\cpp}\chi_Q\|_\cpp=1$.
\end{proof}

\medskip

\begin{lemma} \label{lemma:dcu} Given $\pp$, suppose $\pp\in M\Pp$.
  Then for any cube $Q$ and any set $E\subset Q$,
$$ \frac{|E|}{|Q|} \leq M_0  \frac{\|\chi_E\|_\pp}{\|\chi_Q\|_\pp}\,,$$  where $M_0$ is the norm of the Hardy-Littlewood operator $M$ on $\Lp(\R^n).$
\end{lemma}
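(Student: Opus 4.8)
The plan is to exploit the elementary pointwise lower bound for the Hardy--Littlewood maximal function of a characteristic function. Fix a cube $Q$ and a measurable set $E\subset Q$. For every $x\in Q$, using $Q$ itself as the cube in the supremum defining $Mf$,
\[
M\chi_E(x) \geq \avgint_Q \chi_E(y)\,dy = \frac{|E|}{|Q|},
\]
so that pointwise on $\R^n$ we have the inequality $\dfrac{|E|}{|Q|}\,\chi_Q \leq M\chi_E$.

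Next I would apply the norm $\|\cdot\|_\pp$ to both sides. Since $\Lp(\R^n)$ is a Banach function space, the norm is monotone with respect to pointwise (a.e.) inequalities of nonnegative functions, hence
\[
\frac{|E|}{|Q|}\,\|\chi_Q\|_\pp \leq \|M\chi_E\|_\pp.
\]
Finally, by the hypothesis $\pp\in M\Pp$, the maximal operator is bounded on $\Lp(\R^n)$ with operator norm $M_0$, so $\|M\chi_E\|_\pp \leq M_0\,\|\chi_E\|_\pp$. Combining the two displays and dividing by $\|\chi_Q\|_\pp$ (which is positive and finite because $0<|Q|<\infty$) gives exactly
\[
\frac{|E|}{|Q|} \leq M_0\,\frac{\|\chi_E\|_\pp}{\|\chi_Q\|_\pp}.
\]

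There is essentially no obstacle here: the only points worth a word are that the maximal function is computed over all axis-parallel cubes containing the point (so $Q$ is admissible in the supremum) and that $\|\chi_Q\|_\pp\in(0,\infty)$ so the final division is legitimate; both are immediate. One could alternatively route the argument through the weak-type estimate~\eqref{thm:weaktype}, but invoking strong boundedness directly is cleaner and yields the stated constant $M_0$.
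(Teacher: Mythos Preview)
Your proof is correct and follows essentially the same approach as the paper: both start from the pointwise bound $M\chi_E(x)\ge |E|/|Q|$ for $x\in Q$ and then use the $\Lp$ boundedness of $M$. The paper phrases the second step via the weak-type inequality~\eqref{thm:weaktype} at level $t=|E|/|Q|$ (with constant $M_0$), whereas you apply monotonicity of the norm and the strong bound directly; as you note, these are equivalent here.
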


\begin{proof}
Fix $Q$ and $E\subset Q$.  Then for every $x\in Q$,
\[ M(\chi_E)(x) \geq \frac{|E|}{|Q|}.  \]
Since $M$ is bounded on $\Lp(\R^n)$, by the weak-type inequality with $t=\frac{|E|}{|Q|}$ (notice that the constant $C$ in the right hand side of
(\ref{thm:weaktype}) can be taken to be $M_0$),
\[ \frac{|E|}{|Q|} \|\chi_Q\|_\pp \leq M_0\|\chi_E\|_\pp. \]
\end{proof}

\medskip

\begin{lemma} \label{lemma:chema}
Given $\pp$, suppose $\pp\in M\Pp$
  and $1<p_-\leq p_+<\infty$.  Then there exist constants
  $C,\,\delta>0$ such that given any cube $Q$ and any set $E \subset
  Q$,
$$\frac{\|\chi_E\|_\pp}{\|\chi_Q\|_\pp} \leq C
\left(\frac{|E|}{|Q|}\right)^\delta\,.$$
\end{lemma}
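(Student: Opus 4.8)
The plan is to bound $\|\chi_E\|_\pp$ from above by duality and then reduce the whole question to the classical $A_\infty$ decay estimate recorded in Section~\ref{section:variable}, manufacturing the relevant $A_1$ weights by means of the Rubio de Francia iteration.

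First I would record two preliminary facts. Since $1<p_-\le p_+<\infty$ and $\pp\in M\Pp$, Diening's theorem (see~\cite{cruz-fiorenza-book,diening-harjulehto-hasto-ruzicka2010}) gives that the conjugate exponent $\cpp$ also lies in $M\Pp$; let $\Lambda\ge1$ denote the norm of $M$ on $L^\cpp(\R^n)$. Second, by the duality formula $\|f\|_\pp\approx\sup_{\|g\|_\cpp\le1}\int_{\R^n}|fg|$, applied to $f=\chi_E$ and with $g\ge0$, it is enough to prove that
\[
\int_E g\,dx \;\lesssim\; \Big(\frac{|E|}{|Q|}\Big)^{\delta}\,\|\chi_Q\|_\pp
\]
for some fixed $\delta>0$ and every nonnegative $g$ with $\|g\|_\cpp\le1$, the implicit constant and $\delta$ being independent of $g$, $Q$ and $E$.

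To prove this I would fix such a $g$ and apply the Rubio de Francia algorithm in $L^\cpp(\R^n)$ (see, e.g.,~\cite{cruz-martell-perezBook}): the function $G=\sum_{k\ge0}(2\Lambda)^{-k}M^kg$ (with $M^0g=g$) satisfies $g\le G$ pointwise, $\|G\|_\cpp\le2\|g\|_\cpp\le2$, and $MG\le2\Lambda\,G$, so that $G\in A_1$ with $[G]_{A_1}\le2\Lambda$. In particular $G\in A_\infty$ with characteristic controlled by $\Lambda$ and $n$ only, so the $A_\infty$ estimate of Section~\ref{section:variable} gives $\int_E G\le C(|E|/|Q|)^{\delta}\int_Q G$ with constants $C,\delta>0$ that do not depend on $g$. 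Then H\"older's inequality in the scale of variable Lebesgue spaces together with the bound $\|G\|_\cpp\le2$ yields $\int_Q G=\int_{\R^n}G\,\chi_Q\le 2\|G\|_\cpp\|\chi_Q\|_\pp\le 4\|\chi_Q\|_\pp$, and combining these with $g\le G$ gives $\int_E g\le 4C(|E|/|Q|)^{\delta}\|\chi_Q\|_\pp$. Taking the supremum over $g$ then finishes the argument.

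The main obstacle is the uniformity of the constants $C$ and $\delta$: they must not depend on the weight $G$, which varies with $g$. This is exactly why the proof is routed through the class $A_1$ by means of Rubio de Francia — every weight $G$ produced this way has $A_1$ characteristic at most $2\Lambda$, and by the reverse H\"older inequality the $A_\infty$ decay constant and exponent depend only on the $A_\infty$ (hence $A_1$) characteristic of the weight, so here I would be careful to invoke the quantitative form of that fact. The only input used beyond the preliminaries of this section is Diening's theorem that $\pp\in M\Pp$ implies $\cpp\in M\Pp$, needed to run the iteration on $L^\cpp(\R^n)$.
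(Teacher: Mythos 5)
Your proposal is correct and follows essentially the same route as the paper's own proof: duality, the Rubio de Francia iteration on $L^\cpp(\R^n)$ (using that $\cpp\in M\Pp$), the uniform $A_1$ bound $[Rg]_{A_1}\le 2\|M\|_\cpp$ to get $A_\infty$ decay constants independent of $g$, and H\"older's inequality to recover $\|\chi_Q\|_\pp$. The only cosmetic difference is that you take a supremum over all admissible $g$ while the paper selects a single near-extremal $g$.
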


\begin{proof}
  Since $\pp \in M\Pp$ and $1<p_-\leq p_+<\infty$, we have that $\cpp
  \in M\Pp$ \cite[Corollary~4.64]{cruz-fiorenza-book}.  Therefore,  we can define a
  Rubio de Francia iteration
  algorithm~\cite[Section~2.1]{cruz-martell-perezBook}:
\[ Rg(x) = \sum_{k=0}^\infty \frac{M^kg(x)}{2^k \|M\|_\cpp^k}, \]
where $\|M\|_\cpp$ is the operator norm of the maximal operator on
$L^\cpp$ and $M^0g = |g|$.  Then $g$ and $Rg$ are comparable in size:
$|g(x)| \leq Rg(x)$ and $\|Rg\|_\cpp \leq 2\|g\|_\cpp$.  Moreover, $Rg
\in A_1$ and $[Rg]_{A_1} \leq 2\|M\|_\cpp$.  Therefore, there exist
$C,\,\delta>0$ such that given any cube $Q$ and $E\subset Q$,
\[ \frac{Rg(E)}{Rg(Q)} \leq C\left(\frac{|E|}{|Q|}\right)^\delta. \]

Now by duality and H\"older's inequality,
there exists $g\in L^\cpp$, $\|g\|_\cpp=1$, such that
\begin{multline*}
\|\chi_E\|_\pp \leq C\int_\subRn \chi_E(x)g(x)\,dx
\leq CRg(E) \leq C \left(\frac{|E|}{|Q|}\right)^\delta Rg(Q)  \\ \leq
C\left(\frac{|E|}{|Q|}\right)^\delta \|\chi_{Q}\|_\pp \|Rg\|_\cpp
\leq 2C\left(\frac{|E|}{|Q|}\right)^\delta \|\chi_{Q}\|_\pp.
\end{multline*}
\end{proof}

We can now prove Theorem~\ref{thm:democracy}.   We first make
some reductions, and then divide the proof into three propositions.
First, we will do the
proof for a single admissible wavelet $\psi,$ since considering a
family of $L$ wavelets will only introduce an additional finite sum
and make the constants depend on $L$.

Second,  to prove Theorem~\ref{thm:democracy} we need to estimate
expressions of the form
\[ \bigg\|\sum_{Q\in \Gamma} \frac{\psi_Q}{\|\psi_Q\|_\pp}\bigg\|_\pp. \]
for any finite set $\Gamma$ of  dyadic cubes. By Theorem~\ref{thm:var-wavelets} we have
\[ \|\psi_Q\|_\pp \approx \||Q|^{-1/2} \chi_Q\|_\pp = |Q|^{-1/2} \|\chi_Q\|_\pp. \]
Thus, again by Theorem~\ref{thm:var-wavelets},
\begin{equation} \label{eqn:3.1}
\bigg\|\sum_{Q\in \Gamma} \frac{\psi_Q}{\|\psi_Q\|_\pp}\bigg\|_\pp \approx
\bigg\|\sum_{Q\in \Gamma} \frac{\psi_Q}{|Q|^{-1/2} \|\chi_Q\|_\pp}\bigg\|_\pp \approx
\bigg\|\bigg(\sum_{Q\in \Gamma} \frac{\chi_Q}{\|\chi_Q\|^2_\pp}\bigg)^{1/2}\bigg\|_\pp .
\end{equation}
Therefore, it will be enough to show that the righthand expression
satisfies the desired inequalities.  It is illuminating at this point
to consider the special case where the cubes in $\Gamma$ are pairwise
disjoint. With this as a model we will then obtain the desired estimate in
the general case.

\begin{prop} \label{prop:disjoint}
  Given an exponent function $\pp$, suppose $1<p_-\leq p_+<\infty$ and
  the Hardy-Littlewood maximal operator is bounded on $\Lp(\R^n)$.
Then there exist constants such that given any collection $\Gamma$ of
pairwise disjoint dyadic cubes, $\card(\Gamma)=N$,

\[ N^{1/p_+} \lesssim
\bigg\|\bigg(\sum_{Q\in \Gamma} \frac{\chi_Q}{\|\chi_Q\|^2_\pp}\bigg)^{1/2}\bigg\|_\pp
\lesssim N^{1/p_-}. \]
\end{prop}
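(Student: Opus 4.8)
The plan is to exploit the disjointness of the cubes to reduce the mixed-norm expression on the right of \eqref{eqn:3.1} to a sum of local contributions, and then to estimate each contribution using the integral characterization \eqref{eqn:useful} of the Luxemburg norm together with the comparison $|Q|^{1/p_Q}\approx\|\chi_Q\|_\pp$ (Lemma~\ref{lemma:diening} gives one direction; the other follows from $\pp\in M\Pp$, as noted after that lemma). Since the cubes in $\Gamma$ are pairwise disjoint, on each $Q\in\Gamma$ the sum $\sum_{Q'\in\Gamma}\chi_{Q'}\|\chi_{Q'}\|_\pp^{-2}$ equals the single term $\|\chi_Q\|_\pp^{-2}$, so
\[
\bigg\|\bigg(\sum_{Q\in\Gamma}\frac{\chi_Q}{\|\chi_Q\|^2_\pp}\bigg)^{1/2}\bigg\|_\pp
= \bigg\|\sum_{Q\in\Gamma}\frac{\chi_Q}{\|\chi_Q\|_\pp}\bigg\|_\pp,
\]
and the modular of this function at level $\lambda$ is exactly $\sum_{Q\in\Gamma}\int_Q\big(\lambda\|\chi_Q\|_\pp\big)^{-p(x)}\,dx$. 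So the norm we want is the unique $\lambda$ making this sum equal to $1$.

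For the upper bound, I would take $\lambda = N^{1/p_-}$ and show the modular is $\le 1$. On each $Q$, using $\|\chi_Q\|_\pp\gtrsim |Q|^{1/p_Q}$ and $p(x)\ge p_-$, one bounds $\int_Q(\lambda\|\chi_Q\|_\pp)^{-p(x)}\,dx$ by roughly $N^{-1}$ times a quantity controlled by $\int_Q |Q|^{-p(x)/p_Q}\,dx$; the point is that $\int_Q |Q|^{-p(x)/p_Q}\,dx \lesssim 1$, which is a standard consequence of the Jensen-type manipulation already used in the proof of Lemma~\ref{lemma:diening} (one must be slightly careful separating the cases $|Q|\le 1$ and $|Q|>1$ and using $p_-\le p(x)\le p_+$). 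Summing over the $N$ cubes gives modular $\lesssim N\cdot N^{-1}=1$ up to a constant, which yields $\|\cdot\|_\pp \lesssim N^{1/p_-}$ after absorbing the constant into $\lambda$. For the lower bound I would instead choose $\lambda = cN^{1/p_+}$ for small $c$ and show the modular is $\ge 1$: now use the other inequality $\|\chi_Q\|_\pp\lesssim |Q|^{1/p_Q}$, the bound $p(x)\le p_+$, and a lower estimate $\int_Q |Q|^{-p(x)/p_Q}\,dx\gtrsim 1$ (again from Jensen applied in the reverse direction, i.e. to $\exp$ of the average), to see each term is $\gtrsim N^{-1}$, so the sum is $\gtrsim 1$; choosing $c$ appropriately forces $\|\cdot\|_\pp \gtrsim N^{1/p_+}$.

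The main obstacle is controlling the quantity $\int_Q |Q|^{-p(x)/p_Q}\,dx$ from \emph{both} sides by absolute constants, uniformly over all dyadic cubes $Q$ and over the full range of exponents — the exponent $p(x)/p_Q$ is a normalized version of $p$, but when $|Q|$ is very large or very small the integrand can be as big as $|Q|^{(p_+-p_-)/p_Q}$ pointwise, so the estimate genuinely uses that the \emph{average} of $1/p$ over $Q$ is $1/p_Q$ and a convexity argument, not merely the pointwise bounds $p_-\le p\le p_+$. This is exactly the computation hidden inside Lemma~\ref{lemma:diening}, and I expect the proof of the proposition to amount to making that computation two-sided and then doing the bookkeeping over the $N$ disjoint cubes. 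Everything else — the reduction via disjointness, the choice of $\lambda$, and the passage between modular and norm — is routine given \eqref{eqn:useful} and Lemmas~\ref{lemma:diening} and~\ref{lemma:chema}.
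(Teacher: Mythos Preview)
Your overall structure is right: reduce via disjointness to $\big\|\sum_{Q\in\Gamma}\chi_Q/\|\chi_Q\|_\pp\big\|_\pp$, then test the modular at $\lambda=N^{1/p_-}$ and $\lambda=N^{1/p_+}$. But you take an unnecessary detour that manufactures the ``main obstacle'' you describe.

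The paper never passes through $|Q|^{1/p_Q}$ at all. Instead it applies \eqref{eqn:useful} directly to $f=\chi_Q$: since $p_+<\infty$,
\[
\int_Q \|\chi_Q\|_\pp^{-p(x)}\,dx \;=\; \int_{\R^n}\left(\frac{\chi_Q(x)}{\|\chi_Q\|_\pp}\right)^{p(x)}\,dx \;=\; 1
\]
\emph{exactly}, for every cube $Q$. With this identity in hand the modular computation is a one-liner. For the lower bound,
\[
\int_{\R^n}\!\left(N^{-1/p_+}\sum_{Q\in\Gamma}\frac{\chi_Q}{\|\chi_Q\|_\pp}\right)^{p(x)}\!dx
=\sum_{Q\in\Gamma}\int_Q N^{-p(x)/p_+}\|\chi_Q\|_\pp^{-p(x)}\,dx
\ge N^{-1}\sum_{Q\in\Gamma}1=1,
\]
giving $\|\cdot\|_\pp\ge N^{1/p_+}$; the upper bound is the same with $p_-$ in place of $p_+$ and the inequality reversed. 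No constants to absorb, no Jensen argument, no case split on $|Q|\lessgtr 1$, and neither Lemma~\ref{lemma:diening} nor its converse (nor Lemma~\ref{lemma:chema}) is used here.

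Your route---replace $\|\chi_Q\|_\pp$ by $|Q|^{1/p_Q}$ and then try to control $\int_Q|Q|^{-p(x)/p_Q}\,dx$ two-sidedly---can be pushed through, but only by circling back: the cleanest way to see that integral is $\approx 1$ is precisely to undo the substitution and invoke $\int_Q\|\chi_Q\|_\pp^{-p(x)}\,dx=1$. In other words, the ``obstacle'' you identify is self-inflicted. Note also that the paper's argument for this proposition uses only $p_+<\infty$ (so that \eqref{eqn:useful} applies); the boundedness of $M$ is not actually invoked at this step, whereas your approach needs it to get the reverse inequality in Lemma~\ref{lemma:diening}.
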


\begin{proof}
We will prove the first inequality; the second is proved in
  essentially the same way, replacing $p_+$ by $p_-$ and reversing the
  inequalities.  Fix a collection $\Gamma$ with $\mbox{Card}\,(\Gamma)
  = N$.  Since the cubes in $\Gamma$ are disjoint, we have that
\[ \bigg\|\bigg(\sum_{Q\in \Gamma}
\frac{\chi_Q}{\|\chi_Q\|^2_\pp}\bigg)^{1/2}\bigg\|_\pp =
\bigg\|\sum_{Q\in \Gamma} \frac{\chi_Q}{\|\chi_Q\|_\pp}\bigg\|_\pp. \]
We now estimate as follows:
\begin{multline*}  \int_\subRn \left( N^{-1/p_+}\sum_{Q\in \Gamma} \frac{\chi_Q}{\|\chi_Q\|_\pp}\right)^{p(x)}\,dx \\
 = \sum_{Q\in \Gamma} \int_Q N^{-p(x)/p_+} \|\chi_Q\|_\pp^{-p(x)} \,dx
\geq N^{-1} \sum_{Q\in \Gamma} \int_Q \|\chi_Q\|_\pp^{-p(x)}\,dx  = 1;
\end{multline*}
the last inequality follows from~\eqref{eqn:useful}.
Therefore, by the definition of the $\Lp$ norm,
\[ N^{1/p_+} \leq \bigg\|\sum_{Q\in \Gamma} \frac{\chi_Q}{\|\chi_Q\|_\pp}\bigg\|_\pp. \]
\end{proof}

\medskip

In general, the cubes in the collection $\Gamma$ will not be
disjoint.  To overcome this, we will show that we can linearize the
square function
\[ S_\Gamma (x) := \bigg(\sum_{Q\in \Gamma} \frac{\chi_Q}{\|\chi_Q\|^2_\pp}\bigg)^{1/2}. \]
Such linearization arguments were previously considered
in~\cite{cohen-devore-hochmuth00,garrigos-hernandez04,hsiao-jawerth-lucier-yu94}.
Here, we will use the technique of ``lighted'' and ``shaded'' cubes
introduced in~\cite{MR2379116}.

\begin{prop} \label{prop:linearize}
  Given an exponent function $\pp$, suppose $1<p_-\leq p_+<\infty$ and
  the Hardy-Littlewood maximal operator is bounded on $\Lp(\R^n)$.
Let $\Gamma$ be any finite collection of dyadic cubes.  Then there exists a
sub-collection $\Gamma_{\rm min}\subset \Gamma$ and a collection of pairwise disjoint sets
$\{\light(Q)\}_{Q\in \Gamma_{\rm min}}$, such that $\light(Q)\subset Q$ and
\[ S_\Gamma (x) \approx
\sum_{Q\in \Gamma_{\rm min}}
\frac{\chi_{\light(Q)}(x)}{\|\chi_{Q}\|_\pp}. \]
In these inequalities the constants are independent of the set $\Gamma$.
\end{prop}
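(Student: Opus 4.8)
The plan is to make precise the ``lighted/shaded'' dichotomy for the collection $\Gamma$ of dyadic cubes and to use it to compare $S_\Gamma$ with a single sum of disjointly supported characteristic functions. Since the cubes are dyadic, any two of them are either nested or disjoint. Say a cube $Q\in\Gamma$ is \emph{maximal} if it is not contained in any strictly larger cube of $\Gamma$; the maximal cubes are pairwise disjoint. The idea is to process each ``tower'' of nested cubes separately. For $x$ fixed, let $Q_1(x)\supsetneq Q_2(x)\supsetneq\cdots$ be the cubes of $\Gamma$ containing $x$, listed by decreasing size. Then
\[
S_\Gamma(x)^2=\sum_{i}\frac{1}{\|\chi_{Q_i(x)}\|_\pp^2},
\]
and I want to show this sum is comparable to its largest term, i.e.\ to $\|\chi_{Q_{\mathrm{last}}(x)}\|_\pp^{-2}$ where $Q_{\mathrm{last}}(x)$ is the smallest cube in the tower through $x$. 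Indeed, by Lemma~\ref{lemma:chema}, if $Q'\subsetneq Q$ are dyadic with $|Q'|\le 2^{-n k}|Q|$ (i.e.\ $Q'$ is $k$ dyadic generations below $Q$), then $\|\chi_{Q'}\|_\pp\le C 2^{-n k\delta}\|\chi_Q\|_\pp$, so $\|\chi_{Q'}\|_\pp^{-2}\ge C^{-2}2^{2n k\delta}\|\chi_Q\|_\pp^{-2}$. Summing the geometric-type series over a tower shows $S_\Gamma(x)^2\approx \|\chi_{Q_{\mathrm{last}}(x)}\|_\pp^{-2}$, with constants depending only on $n,\pp$ and not on $\Gamma$.

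Next I would turn this pointwise comparison into the claimed structural statement. For each $x$ covered by $\Gamma$, define $\light$-membership by declaring $x\in\light(Q)$ iff $Q$ is the smallest cube of $\Gamma$ containing $x$; equivalently, $\light(Q)=Q\setminus\bigcup\{Q'\in\Gamma: Q'\subsetneq Q\}$. These sets are pairwise disjoint, each $\light(Q)\subset Q$, and every point of $\bigcup_{Q\in\Gamma}Q$ lies in exactly one $\light(Q)$. Let $\Gamma_{\mathrm{min}}=\{Q\in\Gamma:\light(Q)\ne\emptyset\}$. Then the pointwise identity $Q_{\mathrm{last}}(x)=$ the unique $Q\in\Gamma_{\mathrm{min}}$ with $x\in\light(Q)$, combined with the previous paragraph, gives
\[
S_\Gamma(x)\approx \sum_{Q\in\Gamma_{\mathrm{min}}}\frac{\chi_{\light(Q)}(x)}{\|\chi_Q\|_\pp}
\]
for a.e.\ $x$ (the sum on the right has at most one nonzero term at each point, by disjointness). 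This is exactly the assertion of the proposition, and taking $\Lp$ norms on both sides is immediate once the pointwise comparison is established.

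I expect the main technical point to be the uniform geometric summation over arbitrarily long towers: one must check that the constant in $S_\Gamma(x)^2\approx\|\chi_{Q_{\mathrm{last}}(x)}\|_\pp^{-2}$ does not degrade as the tower becomes longer or as $\Gamma$ varies. This is where Lemma~\ref{lemma:chema} is essential — the power gain $(|E|/|Q|)^\delta$ forces $\|\chi_{Q'}\|_\pp$ to decay geometrically down a dyadic tower, so the series $\sum_{k\ge 0}C 2^{-2nk\delta}$ converges to a bound independent of everything. (The reverse inequality $S_\Gamma(x)^2\ge\|\chi_{Q_{\mathrm{last}}(x)}\|_\pp^{-2}$ is trivial since that term appears.) A minor point to handle carefully is that within a tower the cubes need not occupy consecutive dyadic generations, but since $\|\chi_{Q'}\|_\pp$ is monotone in $Q'$ (smaller cube, smaller norm, by Lemma~\ref{lemma:dcu} or directly) and the gap between successive cubes in the tower is at least one generation, the geometric bound still applies after reindexing. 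With these observations the proof is complete.
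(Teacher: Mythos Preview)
Your proof is correct and follows essentially the same route as the paper's: identify the smallest cube $Q_x\in\Gamma$ containing $x$, use Lemma~\ref{lemma:chema} to get geometric decay of $\|\chi_{Q_0}\|_\pp/\|\chi_{Q_j}\|_\pp$ along the tower of ancestors, and then package the pointwise equivalence via the $\light(Q)$ sets and $\Gamma_{\rm min}$ exactly as you do. The only cosmetic difference is that the paper sums over \emph{all} dyadic ancestors of $Q_x$ (not just those in $\Gamma$), which makes the generations automatically consecutive and sidesteps the reindexing remark at the end of your argument.
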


\begin{proof}
  Fix a finite collection $\Gamma$ and let $\Omega_\Gamma=  \bigcup_{Q\in
    \Gamma} Q$.  For each $x\in \Omega_\Gamma$, let $Q_x\in \Gamma$ be the unique smallest cube
that contains $x$. We immediately  have that for every $x\in
\Omega_\Gamma$,
\[S_\Gamma (x)^2 \geq \frac{\chi_{Q_x}(x)}{\| \chi_{Q_x}\|^2_\pp}. \]
We claim that the reverse inequality holds up to a constant.
Indeed, let
\[ Q_x = Q_0 \subset Q_1 \subset Q_2 \subset Q_3 \subset \cdots \]
be the sequence of all dyadic cubes that contain $Q_x$.  Then $|Q_j|=
2^{jn}|Q_0|$, and by Lemma~\ref{lemma:chema},
\[ \frac{\|\chi_{Q_0}\|_\pp}{\|\chi_{Q_j}\|_\pp}
\leq C\left(\frac{|Q_0|}{|Q_j|}\right)^{\delta} \leq C\, 2^{-jn\delta}. \]
Hence,
\[ S_\Gamma (x)^2 \leq \sum_{j=0}^\infty
\frac{1}{\|\chi_{Q_j}\|^2_\pp}
\leq \frac{C}{\|\chi_{Q_x}\|^2_\pp} \sum_{j=0}^\infty 2^{-2jn\delta}
= C \frac{\chi_{Q_x}(x)}{\|\chi_{Q_x}\|^2_\pp}\,.\]
This gives us the pointwise equivalence
\begin{equation} \label{equ:3.4}
S_\Gamma (x) \approx \frac{\chi_{Q_x}(x)}{\|\chi_{Q_x}\|_\pp}\,.
\end{equation}

Let $\Gamma_{\rm min}= \{ Q_x : x \in \Omega_\Gamma\}$; note that
the cubes in $\Gamma_{\rm min}$ may still not be pairwise disjoint. To obtain a disjoint
family we argue as in~\cite{MR2379116}.  Given $Q\in \Gamma$, let
$\shade(Q)= \bigcup\{ R : R \in \Gamma, R \varsubsetneq Q\}$
and $\light(Q) = Q \setminus \mbox {Shade}\, (Q).$    Then
(see~\cite[Section~4.2.2]{MR2379116}) we have that
$Q \in \Gamma_{\rm min}$ if and only if $\light (Q) \neq
\emptyset$, $x\in \light(Q_x)$, the sets $\light(Q)$ are pairwise disjoint, and
\[ \bigcup_{Q\in \Gamma} Q = \bigcup_{Q\in \Gamma_{\rm min}}
\light(Q)\,.\]
If we combine this analysis with~\eqref{equ:3.4} we get
\begin{equation} \label{equ:3.5}
S_\Gamma (x) \approx \sum_{Q\in \Gamma_{\rm min}} \frac{\chi_{\light(Q)}(x)}{\|\chi_{Q}\|_\pp}\,,
\end{equation}
where in the righthand sum there is at most one non-zero term for any
$x\in \Omega_\Gamma$.
\end{proof}

We can now estimate the square function $S_\Gamma$ for an arbitrary
finite set of dyadic cubes $\Gamma$.

\begin{prop} \label{pro:bounds}
Given an exponent function $\pp$, suppose $1<p_-\leq p_+<\infty$ and
the Hardy-Littlewood maximal operator is bounded on $\Lp(\R^n)$.
If $\Gamma$ is a finite set of dyadic cubes, $\card(\Gamma)=N$, then
\[ N^{1/p_+} \lesssim  \|S_\Gamma \|_\pp \lesssim  N^{1/p_-}\,. \]
\end{prop}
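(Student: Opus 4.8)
The plan is to deduce Proposition~\ref{pro:bounds} from Proposition~\ref{prop:linearize} together with the already-proven estimates of Proposition~\ref{prop:disjoint} and the size lemmas. By Proposition~\ref{prop:linearize} we have
\[ \|S_\Gamma\|_\pp \approx \bigg\|\sum_{Q\in \Gamma_{\rm min}} \frac{\chi_{\light(Q)}}{\|\chi_Q\|_\pp}\bigg\|_\pp, \]
where the sets $\light(Q)$ are pairwise disjoint, $\light(Q)\subset Q$, and $\card(\Gamma_{\rm min})\leq \card(\Gamma)=N$. So the whole problem reduces to estimating the $\Lp$ norm of a sum of characteristic functions over \emph{disjoint} sets, each normalized not by its own norm $\|\chi_{\light(Q)}\|_\pp$ but by the (larger) norm $\|\chi_Q\|_\pp$ of the enclosing cube.

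For the upper bound $\|S_\Gamma\|_\pp \lesssim N^{1/p_-}$, I would test the modular: since the $\light(Q)$ are disjoint,
\[ \int_{\R^n}\bigg( N^{-1/p_-} \sum_{Q\in\Gamma_{\rm min}} \frac{\chi_{\light(Q)}(x)}{\|\chi_Q\|_\pp}\bigg)^{p(x)}dx = \sum_{Q\in\Gamma_{\rm min}} N^{-p(x)/p_-}\int_{\light(Q)} \|\chi_Q\|_\pp^{-p(x)}\,dx \leq N^{-1}\sum_{Q\in\Gamma_{\rm min}}\int_{Q}\|\chi_Q\|_\pp^{-p(x)}\,dx = N^{-1}\cdot N =1, \]
using $N^{-p(x)/p_-}\leq N^{-1}$, the containment $\light(Q)\subset Q$, and the identity $\int_Q \|\chi_Q\|_\pp^{-p(x)}\,dx=1$ from~\eqref{eqn:useful}. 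This bounds the modular by $1$, hence the norm by $N^{1/p_-}$ up to the equivalence constant from Proposition~\ref{prop:linearize}. The lower bound $N^{1/p_+}\lesssim\|S_\Gamma\|_\pp$ is the delicate one: the naive modular computation as in Proposition~\ref{prop:disjoint} would require $\int_{\light(Q)}\|\chi_Q\|_\pp^{-p(x)}\,dx$ to be bounded below, but $\light(Q)$ can be an arbitrarily small portion of $Q$, so this quantity can be tiny and the direct argument fails. This discrepancy — that the $\light(Q)$'s are normalized by the wrong cubes — is the main obstacle.

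To overcome it I would invoke Lemma~\ref{lemma:chema} (or rather its consequence via Lemma~\ref{lemma:dcu}) in the reverse direction: it is precisely here that one needs a lower bound relating $\|\chi_{\light(Q)}\|_\pp$ to $\|\chi_Q\|_\pp$ in terms of the density $|\light(Q)|/|Q|$. Concretely, one cannot have a uniform such bound, so instead I would decompose $\Gamma_{\rm min}$ according to dyadic ranges of the density $|\light(Q)|/|Q|$, or better, follow the approach of~\cite{MR2379116}: partition $\Gamma_{\rm min}$ into subfamilies $\Gamma_m$ on which $2^{-m-1}<|\light(Q)|/|Q|\leq 2^{-m}$, bound the contribution of each $\Gamma_m$ using that on it $\|\chi_{\light(Q)}\|_\pp \gtrsim 2^{-m\delta}\|\chi_Q\|_\pp$ is false — rather $\|\chi_{\light(Q)}\|_\pp \lesssim 2^{-m\delta}\|\chi_Q\|_\pp$ from Lemma~\ref{lemma:chema} while Lemma~\ref{lemma:dcu} gives $2^{-m}\lesssim \|\chi_{\light(Q)}\|_\pp/\|\chi_Q\|_\pp$, and then sum a convergent geometric series in $m$. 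Since this case-splitting and summation is routine once the lemmas are in hand, I will defer the details, but the key point is that Lemmas~\ref{lemma:dcu} and~\ref{lemma:chema} give two-sided polynomial control of $\|\chi_E\|_\pp/\|\chi_Q\|_\pp$ by $|E|/|Q|$, which is exactly what is needed to transfer the disjoint-cube bound of Proposition~\ref{prop:disjoint} to the lighted-set decomposition.

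Finally, combining the two bounds with~\eqref{eqn:3.1} and Theorem~\ref{thm:var-wavelets} yields Proposition~\ref{pro:bounds}, and pairing it with Proposition~\ref{prop:disjoint} applied to a collection of $N$ disjoint cubes (which shows the exponents $1/p_-$ and $1/p_+$ are attained, giving sharpness) completes the proof of Theorem~\ref{thm:democracy}: the supremum over $\card(\Gamma)=N$ is $\approx N^{1/p_-}$ and the infimum is $\approx N^{1/p_+}$.
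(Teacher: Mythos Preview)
Your upper bound is correct and matches the paper's argument (with the trivial correction that $\card(\Gamma_{\rm min})\le N$, so the final ``$=N^{-1}\cdot N$'' should read ``$\le N^{-1}\card(\Gamma_{\rm min})\le 1$''). The gap is in the lower bound. You correctly identify the obstacle---$\light(Q)$ may occupy an arbitrarily small fraction of $Q$---but the fix you sketch (partition $\Gamma_{\rm min}$ into levels $\Gamma_m$ with $|\light(Q)|/|Q|\approx 2^{-m}$ and sum a geometric series) does not produce a \emph{lower} bound. From Lemma~\ref{lemma:dcu} you only get, on each level, $\|S_\Gamma\|_\pp\gtrsim 2^{-m}(\card(\Gamma_m))^{1/p_+}$; taking a supremum over $m$ yields nothing unless you already know that a level with bounded $m$ carries a definite fraction of all the cubes. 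That is precisely the missing ingredient, and it is a combinatorial fact rather than an analytic one: if $\Gamma_L=\{Q\in\Gamma:\ |\light(Q)|\ge 2^{-n}|Q|\}$, then $\card(\Gamma_L)\ge \frac{2^n-1}{2^n}\,\card(\Gamma)$ (\cite[Lemma~4.3]{MR2379116}). The paper uses this directly: restrict the sum to $\Gamma_L\subset\Gamma_{\rm min}$, on which Lemma~\ref{lemma:dcu} gives the \emph{uniform} bound $\|\chi_{\light(Q)}\|_\pp/\|\chi_Q\|_\pp\ge (2^nM_0)^{-1}$, and then a single modular computation (mirroring your upper-bound estimate with the inequalities reversed and this constant absorbed) yields $\|S_\Gamma\|_\pp\gtrsim N^{1/p_+}$. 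No dyadic decomposition in $m$, no geometric series, and Lemma~\ref{lemma:chema} is not used in this step at all.

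As an aside, your closing remark about sharpness is not correct: Proposition~\ref{prop:disjoint} shows only $N^{1/p_+}\lesssim\|\cdot\|\lesssim N^{1/p_-}$ for \emph{any} family of disjoint cubes, which does not exhibit a family attaining $\approx N^{1/p_-}$ or $\approx N^{1/p_+}$. That requires the separate construction in Proposition~\ref{pro:sharp}, where one selects cubes inside the regions where $p(x)$ is close to $p_-$ (respectively $p_+$).
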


\begin{proof}

By Proposition~\ref{prop:linearize}, to prove the righthand inequality
it suffices to show that
\[ \bigg\| \sum_{Q\in \Gamma_{\rm min}}
\frac{\chi_{\light(Q)}}{\|\chi_{Q}\|_\pp}\bigg\|_\pp
\leq N^{1/p_-}\,. \]
By the definition of the $\Lp$ norm, this follows from the fact that
\begin{multline*}
\int_\subRn \left( N^{-1/p_-}\sum_{Q\in \Gamma_{\rm min}}
\frac{\chi_{\light(Q)}(x)}{\|\chi_Q\|_\pp}\right)^{p(x)}\,dx
 = \sum_{Q\in \Gamma_{\rm min}} \int_{\light(Q)} N^{-p(x)/p_-} \|\chi_Q\|_\pp^{-p(x)} \,dx\\
\leq \frac{1}{N} \sum_{Q\in \Gamma_{\rm min}} \int_Q
\|\chi_Q\|_\pp^{-p(x)}\,dx
= \frac{1}{N}\card(\Gamma_{\rm min})\leq 1,
\end{multline*}
where we have used that the sets $\light(Q)$ are disjoint, $p(x) \geq p_-$ and \eqref{eqn:useful}.

\medskip

We now prove the lefthand inequality; again by Proposition~\ref{prop:linearize}
it suffices to show that
\[ \bigg\| \sum_{Q\in \Gamma_{\rm min}}
\frac{\chi_{\light(Q)}}{\|\chi_{Q}\|_\pp}
\bigg\|_\pp \geq C N^{1/p^+}\,. \]
where $C = M_0^{-1} 2^{-n} \big(\frac{2^n -1}{2^n}\big)^{1/p_-}$ and
$M_0$ is the norm of the maximal operator on $\Lp$.  In fact, we will
proved this inequality with $\Gamma_{\rm min}$ replaced by a
sub-collection $\Gamma_L$.

Given a cube $Q\in\Gamma$ , we say $Q$ is \emph{lighted} if $|\light
(Q)| \geq |Q|/2^n $ .  Let $\Gamma_L$ be the collection of
lighted cubes.   Observe that $\Gamma_L \subset \Gamma_{\rm min}\,.$
As was proved in~\cite[Lemma~4.3]{MR2379116}, for every finite set
$\Gamma$ of dyadic cubes,
\[ \frac{2^n - 1}{2^n} \card(\Gamma)\leq \card(\Gamma_L) \leq
\card(\Gamma_{\rm min}) \leq \card(\Gamma).
\]
Hence,
by Lemma~\ref{lemma:dcu}, if $Q \in \Gamma_L$, then
\[ \frac{\|\chi_{\light (Q)}\|_\pp}{\|\chi_Q\|_\pp}
\geq \frac{1}{M_0} \frac{|\light(Q)|}{|Q|} \geq \frac{1}{2^n M_0}\,. \]

We can now estimate as follows:  since $p_-\leq p(x) \leq p_+$, the
sets $\light(Q)$, $Q\in \Gamma_L$, are disjoint and \eqref{eqn:useful},
\begin{align*}
& \int_\subRn \left( C^{-1} N^{-1/p_+}\sum_{Q\in \Gamma_{L}}
\frac{\chi_{\light(Q)}(x)}{\|\chi_Q\|_\pp}\right)^{p(x)}\,dx
\\
& \qquad \qquad  =
\sum_{Q\in \Gamma_{L}} \int_{\light(Q)} C^{-p(x)} N^{-p(x)/p_+}
\|\chi_Q\|_\pp^{-p(x)} \,dx
\\
& \qquad \qquad \geq
\frac{1}{N} \sum_{Q\in \Gamma_{L}}
\int_{\light(Q)} C^{-p(x)} \|\chi_Q\|_\pp^{-p(x)}\,dx
\\
& \qquad \qquad \geq
\frac{1}{N} \sum_{Q\in \Gamma_{L}} \int_{\light(Q)} C^{-p(x)} \frac{1}{(2^n M_0)^{p(x)}}
\|\chi_{\light(Q)}\|_\pp^{-p(x)}\,dx
\\
 & \qquad \qquad =
 \frac{1}{N} \sum_{Q\in \Gamma_{L}}
\int_{\light(Q)} \bigg(\frac{2^n-1}{2^n}\bigg)^{-p(x)/p_-}
\|\chi_{\light(Q)}\|_\pp^{-p(x)}\,dx
\\
& \qquad \qquad  \geq
\frac{2^n}{2^n - 1} \frac{1}{N}
\sum_{Q\in \Gamma_{L}} \int_{\light(Q)} \|\chi_{\light(Q)}\|_\pp^{-p(x)}\,dx
\\
& \qquad \qquad =
\frac{2^n}{2^n - 1} \frac{1}{N} \card(\Gamma_L)
\\
& \qquad \qquad \geq
\frac{1}{N} \card(\Gamma) = 1.
\end{align*}
The desired inequality now follows by the definition of the $\Lp$ norm.
\end{proof}

To finish the proof of Theorem~\ref{thm:democracy} we need to show
that the bounds given in Proposition~\ref{pro:bounds} are sharp.   This
is an immediate consequence of the following result:  since the
constants in it are independent of $\epsilon$, we can let
$\epsilon\rightarrow 0$.

\begin{prop} \label{pro:sharp} Given an exponent function $\pp$,
  suppose $1<p_-\leq p_+<\infty$ and the Hardy-Littlewood maximal
  operator is bounded on $\Lp(\R^n)$.  Fix $\epsilon >0$ and $N\in
  \mathbb N$; then there exists families $\Gamma_1$, $\Gamma_2,$ of
  pairwise disjoint dyadic cubes such that
\begin{gather*}
\bigg\|\sum_{Q\in \Gamma_1} \frac{\chi_Q}{\|\chi_Q\|_\pp}\bigg\|_\pp
\geq C_1 N^{\frac{1}{p_- + \epsilon}} \\
\intertext{and}
\bigg\|\sum_{Q\in \Gamma_2} \frac{\chi_Q}{\|\chi_Q\|_\pp}\bigg\|_\pp
\leq  C_2 N^{\frac{1}{p_+ - \epsilon}}.
\end{gather*}
Moreover, the constants $C_1$ and $C_2$ are independent of $\epsilon$ and $N.$
\end{prop}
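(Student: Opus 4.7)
The strategy is to localize near density points of the sets where $p(x)$ nearly attains its essential extremes, and then exploit the integral characterization \eqref{eqn:useful} together with Lemmas~\ref{lemma:dcu} and~\ref{lemma:chema}. Set
$$E_1 = \{x \in \R^n : p(x) < p_- + \epsilon/2\}, \qquad E_2 = \{x \in \R^n : p(x) > p_+ - \epsilon/2\};$$
by the definitions of $\essinf$ and $\esssup$ both sets have positive Lebesgue measure. For parameters $\eta_j>0$ (to be chosen), the Lebesgue differentiation theorem applied at a density point of $E_j$ produces a dyadic cube $Q_0^{(j)}$ of arbitrarily small side length with $|Q_0^{(j)}\cap E_j|/|Q_0^{(j)}|\ge 1-\eta_j$. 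Subdivide $Q_0^{(j)}$ into $M_j=2^{nk_j}\ge 2N$ equal dyadic subcubes; a pigeonhole argument (comparing $\sum_i |Q_i\setminus E_j|=|Q_0^{(j)}\setminus E_j|\le \eta_j|Q_0^{(j)}|$ with the definition of "bad" subcube) produces at least $M_j/2\ge N$ subcubes $Q$ satisfying $|Q\cap E_j|/|Q|\ge 1-2\eta_j$. Collect $N$ of them into $\Gamma_j$.

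For the lower bound on $\|S_{\Gamma_1}\|_\pp$, by disjointness and \eqref{eqn:useful} it suffices to exhibit $\lambda\ge C_1 N^{1/(p_-+\epsilon)}$ with
$$\sum_{Q\in\Gamma_1}\int_Q \lambda^{-p(x)}\|\chi_Q\|_\pp^{-p(x)}\,dx\ge 1.$$
For each $Q\in\Gamma_1$, restrict the integral to $Q\cap E_1$. On this set $p(x)<p_-+\epsilon/2$, so $\lambda^{-p(x)}\ge \lambda^{-(p_-+\epsilon/2)}$ whenever $\lambda\ge 1$. Taking $\eta_1\le 1/4$ ensures $|Q\cap E_1|/|Q|\ge 1/2$, so Lemma~\ref{lemma:dcu} yields $\|\chi_Q\|_\pp\le 2M_0\|\chi_{Q\cap E_1}\|_\pp$; combining this with \eqref{eqn:useful} applied to $\chi_{Q\cap E_1}$ gives $\int_{Q\cap E_1}\|\chi_Q\|_\pp^{-p(x)}\,dx\ge (2M_0)^{-p_+}$. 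Summing over $N$ cubes and solving for $\lambda$ produces $\lambda\ge (2M_0)^{-p_+/(p_-+\epsilon/2)} N^{1/(p_-+\epsilon/2)}\ge C_1 N^{1/(p_-+\epsilon)}$, with $C_1$ depending only on $M_0$, $p_-$, $p_+$.

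For the upper bound we seek $\lambda=C_2 N^{1/(p_+-\epsilon)}$ with
$$\sum_{Q\in\Gamma_2}\int_Q\lambda^{-p(x)}\|\chi_Q\|_\pp^{-p(x)}\,dx\le 1.$$
Split each integral across $Q\cap E_2$ and $Q\setminus E_2$. On the first piece, $p(x)>p_+-\epsilon/2$ combined with $\lambda\ge 1$ gives $\lambda^{-p(x)}\le\lambda^{-(p_+-\epsilon/2)}$, so by \eqref{eqn:useful} this piece contributes at most $\lambda^{-(p_+-\epsilon/2)}$. For the second piece, Lemma~\ref{lemma:chema} applied with $E=Q\setminus E_2$ yields $\|\chi_{Q\setminus E_2}\|_\pp\le C(2\eta_2)^\delta\|\chi_Q\|_\pp$; a second invocation of \eqref{eqn:useful}, now for $\chi_{Q\setminus E_2}$, upgrades this to $\int_{Q\setminus E_2}\|\chi_Q\|_\pp^{-p(x)}\,dx\le (C(2\eta_2)^\delta)^{p_-}$. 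Summing over $\Gamma_2$ bounds the total by $N\lambda^{-(p_+-\epsilon/2)}+N\lambda^{-p_-}(C(2\eta_2)^\delta)^{p_-}$.

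The main obstacle is the shaded contribution: with $\lambda=C_2N^{1/(p_+-\epsilon)}$ the first summand is controlled for any fixed $C_2\ge 2^{1/p_-}$ since its $N$-exponent is negative, but the second grows polynomially in $N$ like $N^{(p_+-\epsilon-p_-)/(p_+-\epsilon)}(2\eta_2)^{\delta p_-}$. We absorb it by choosing $\eta_2=\eta_2(N,\epsilon)$ sufficiently small, which forces the cubes of $\Gamma_2$ to be correspondingly small; this is permitted because the Lebesgue differentiation step allows $Q_0^{(2)}$ to be arbitrarily small. Crucially, this choice of $\eta_2$ does not affect $C_2$, which depends only on $M_0$, $p_-$, $p_+$. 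Hence both $C_1$ and $C_2$ are independent of $N$ and $\epsilon$, as required.
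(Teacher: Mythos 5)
Your proof is correct, and the overall strategy (localize near density points of where $p$ is near its extremes, split the modular integral, and shrink the cubes with $N$) is the same as the paper's.  The lower bound for $\Gamma_1$ is essentially the paper's argument: the paper works directly with the modular (keeping $N^{-p(x)/(p_-+\epsilon)}$ inside the integral), while you pass to the cruder pointwise bounds $\lambda^{-p(x)}\ge\lambda^{-(p_-+\epsilon/2)}$ and $\|\chi_Q\|_\pp^{-p(x)}\ge(2M_0)^{-p_+}\|\chi_{Q\cap E_1}\|_\pp^{-p(x)}$, which costs you a worse (but still admissible) constant $C_1=(2M_0)^{-p_+/p_-}$ versus the paper's $(2M_0)^{-1}$.

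The genuinely different part is your treatment of the bad-set contribution for $\Gamma_2$.  The paper bounds $\int_{Q\setminus H_\epsilon}\|\chi_Q\|_\pp^{-p(x)}\,dx$ via Lemma~\ref{lemma:diening} ($\|\chi_Q\|_\pp^{-1}\le 2|Q|^{-1/p_Q}$) together with an extra Lebesgue differentiation applied to $1/p(\cdot)$, which yields $1/p_Q<1/(p_+-\epsilon)$; then $|Q|^{-p(x)/p_Q}<|Q|^{-1}$ on the bad set (for $|Q|\le 1$), so that term is $\lesssim|Q\setminus H_\epsilon|/|Q|<1/(2N)$ and the factor of $N$ is absorbed.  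You instead apply Lemma~\ref{lemma:chema} to $E=Q\setminus E_2$, followed by a second use of \eqref{eqn:useful} for $\chi_{Q\setminus E_2}$, to get $\int_{Q\setminus E_2}\|\chi_Q\|_\pp^{-p(x)}\,dx\le(C(2\eta_2)^\delta)^{p_-}$, and then choose $\eta_2=\eta_2(N,\epsilon)$ small.  This is a real simplification: you avoid Lemma~\ref{lemma:diening}, avoid the constraint $|Q|\le 1$, and avoid selecting the Lebesgue points inside $H_{\epsilon/2}$ with an additional Lebesgue-point condition on $p(\cdot)^{-1}$.  The trade-off is that your density threshold $\eta_2\sim N^{-\alpha(\epsilon)}$ depends on $\epsilon$ through the exponent, not just on $N$ as in the paper's cleaner $1/(2N)$ cut; but since $\eta_2$ is purely a construction parameter and not part of $C_2$, this is harmless.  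One small thing you gloss over: both summands require $\lambda\ge 1$, and the lower-bound computation only directly gives $\lambda_0=((2M_0)^{-p_+}N)^{1/(p_-+\epsilon/2)}\ge 1$ for $N\ge(2M_0)^{p_+}$; for smaller $N$ the estimate $\|S_{\Gamma_1}\|_\pp\ge 1$ is trivial and suffices, so there is no gap, but a sentence saying so would tighten the write-up.
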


\begin{proof}
  We first construct $\Gamma_1$.  Let $G_\epsilon=\{ x : p(x) \leq
  p_-+\epsilon\}$.  By the definition of $p_-$, $|G_\epsilon|>0$.  Let
  $x\in G_\epsilon$ be a Lebesgue point of the function
  $\chi_{G_\epsilon}$; by the Lebesgue differentiation theorem, if
  $\{Q_k\}$ is a sequence of dyadic cubes of decreasing side-length
  such that $\bigcap_k Q_k = \{x\}$, then
\[  \lim_{k\rightarrow \infty} \avgint_{Q_k} \chi_{G_\epsilon}(x)\,dx = 1.  \]
Therefore, we can find a dyadic cube $Q_x$ containing $x$ such that
\begin{equation} \label{equ:3.7}
\frac{|G_\epsilon \cap Q_x|}{|Q_x|} \geq \frac{1}{2}.
\end{equation}
(Here, the choice of $1/2$ is arbitrary: any constant $0<c<1$ would
suffice.)  Moreover, we can choose the side-length of $Q_x$ to be
arbitrarily small.

By fixing $N$ such Lebesgue points, we can form a family $\Gamma_1$ of disjoint cubes $Q$ such that $|Q\cap G_\epsilon|>\frac{1}{2}|Q|$.
By Lemma~\ref{lemma:dcu},
\begin{equation} \label{eqn:lower-bnd}
\frac{\|\chi_{G_\epsilon \cap Q}\|_\pp}{\|\chi_Q\|_\pp} \geq \frac{1}{2M_0}.
\end{equation}
(Again, $M_0$ is the bound of the maximal operator on $\Lp$.)

We can now estimate as follows:  since the cubes in $\Gamma_1$ are
disjoint and using \eqref{eqn:lower-bnd},
\begin{align*}
\int_\subRn \bigg(2 M_0 N^{\frac{-1}{p_-+\epsilon}}
\sum_{Q\in \Gamma_1} \frac{\chi_Q(x)}{\|\chi_Q\|_\pp}\bigg)^{p(x)}\,dx
& = \sum_{Q\in \Gamma_1}\int_Q
 (2 M_0)^{p(x)}   N^{\frac{- p(x)}{p_-+\epsilon}}\|\chi_Q\|_\pp^{- p(x)}\,dx \\
& \geq \sum_{Q\in \Gamma_1}\int_{G_\epsilon \cap Q}
 N^{\frac{- p(x)}{p_-+\epsilon}}\|\chi_{G_\epsilon \cap
   Q}\|_\pp^{-p(x)}\,dx \\
& \geq \sum_{Q\in \Gamma_1}  N^{-1} \int_{G_\epsilon \cap Q} \|\chi_{G_\epsilon \cap Q}\|_\pp^{-p(x)}\,dx = 1.
\end{align*}
In the second inequality we use the fact that $p(x) \leq p_- +
\epsilon$ a.e. in $G_\epsilon$ and the last inequality follows from \eqref{eqn:useful}.  By the definition of the norm, this
gives us the first inequality with $C_1 = 2 M_0$.

\medskip

The construction of $\Gamma_2$ is similar but requires a more careful
selection of Lebesgue points.  Let $H_\epsilon=\{ x : p(x) \geq
p_+-\epsilon\}$; again we have that $|H_\epsilon|>0$.   Let $x$ be a
Lebesgue point of the function $\chi_{H_\epsilon}$ contained in the
set $H_{\epsilon/2}$ and also such that $x$ is a Lebesgue point of the
locally integrable function $\pp^{-1}$.   Then by the Lebesgue differentiation theorem we
can find an arbitrarily small dyadic cube $Q$ containing $x$ such that
\begin{equation} \label{equ:3.8}
 \frac{|H_\epsilon \cap Q|}{|Q|} > 1- \frac{1}{2N}.
\end{equation}
Moreover, since (again by the Lebesgue differentiation theorem)
\[ \avgint_Q \frac{1}{p(y)}\,dy \rightarrow \frac{1}{p(x)} \leq
\frac{1}{p_+-\epsilon/2}, \]
we may also choose $Q$ so small that
\begin{equation} \label{equ:3.9}
 \frac{1}{p_Q} = \avgint_Q \frac{1}{p(y)}\,dy < \frac{1}{p_+-\epsilon}.
\end{equation}
Finally, choose $N$ such Lebesgue points and take the cubes $Q$ small enough that
they are pairwise disjoint and so that $|Q|\leq 1$.   This gives us our family $\Gamma_2$.

Fix a constant $C_0>1$; the exact value will be determined below.  We
can now estimate as follows:
\begin{align*}
 \int_\subRn \left( \big(2C_0N\big)^{\frac{-1}{p_+-\epsilon}}
\sum_{Q\in \Gamma_2} \frac{\chi_Q(x)}{\|\chi_Q\|_\pp}\right)^{p(x)}\,dx
& = \sum_{Q\in \Gamma_2}
\int_Q \big(2C_0N\big)^{\frac{-p(x)}{p_+-\epsilon}}
\|\chi_Q\|_\pp^{-p(x)}\,dx \\
& = \sum_{Q\in \Gamma_2}
\int_{H_\epsilon \cap Q}  + \int_{ Q\setminus H_\epsilon} \\
& = I_1 + I_2.
\end{align*}

The estimate for $I_1$ is immediate:  since $p(x) \geq p_+ - \epsilon$
in $H_\epsilon$ we have that
\[
I_1 \leq \frac{1}{2C_0N} \sum_{Q\in \Gamma_2}
\int_{ H_\epsilon \cap Q} \|\chi_{ H_\epsilon \cap Q}\|_\pp^{-p(x)}\,dx
= \frac{1}{2C_0} <\frac{1}{2}.
\]

To estimate $I_2$, note that by Lemma~\ref{lemma:diening},
$ \|\chi_Q\|_\pp^{-1}\leq 2|Q|^{-\frac{1}{p_Q}}$.
Then, since $2N\geq 1$ and by  the definitions of $p_-$ and $p_+$ we
have that
\[ I_2 \leq \sum_{Q\in \Gamma_2} \int_{Q\setminus H_\epsilon} (2 N)^{\frac{-p(x)}{p_+ - \epsilon}} C_0^{\frac{-p(x)}{p_+ - \epsilon}}
2^{p(x)} |Q|^{\frac{-p(x)}{p_Q}} \, dx \leq 2^{p_+} C_0^{\frac{-p_-}{p_+ - \epsilon}} \sum_{Q\in \Gamma_2} \int_{Q\setminus H_\epsilon}
|Q|^{\frac{-p(x)}{p_Q}} \, dx\,. \]
In $Q\setminus H_\epsilon$, $p(x) < p_+ - \epsilon < p_Q$ by
(\ref{equ:3.9}). Thus, $|Q|^{\frac{-p(x)}{p_Q}} < |Q|^{-1}$ since $|Q|
< 1$.  Furthermore, by \eqref{equ:3.8} we have that
$\frac{|Q \setminus H_\epsilon| }{|Q|} < \frac{1}{2N}$.
Hence,
\[ I_2 \leq 2^{p_+} C_0^{\frac{-p_-}{p_+ - \epsilon}} \sum_{Q\in \Gamma_2} \frac{|Q \setminus H_\epsilon| }{|Q|} \leq  2^{p_+} C_0^{\frac{-p_-}{p_+ - \epsilon}} \frac{1}{2N} N = \frac12, \]
where the last equality holds if we choose $C_0$ such that
\[ 2^{p_+} = C_0^{\frac{p_-}{p_+ - \epsilon}}.\]

Since $I_1+I_2\leq 1$, again by the definition of the $\Lp$ norm we
have that
\[ \bigg\|\sum_{Q\in \Gamma_2}
\frac{\chi_Q}{\|\chi_Q\|_\pp}\bigg\|_\pp \leq
\big(2C_0N\big)^{\frac{1}{p_+-\epsilon}}= 2^{\frac{1}{p_+ - \epsilon}}  2^{\frac{p_+}{p_-}} N^{\frac{1}{p_+-\epsilon}}.\]
This
completes the proof of Proposition~\ref{pro:sharp} with $C_2=2^{\frac{p_+}{p_-}+1}$.
\end{proof}

\section{Weighted Variable Lebesgue Spaces}
\label{section:wtd-democracy}

We begin with some preliminary definitions and results on weighted
variable Lebes\-gue spaces.  For proofs and further information,
see~\cite{MR2927495}.   Given an exponent $\pp$ we say that a weight
$w \in A_\pp$ if
\[  [w]_{A_\pp} = \sup_Q |Q|^{-1} \|w\chi_Q\|_\pp \|w^{-1}\chi_Q\|_\cpp
< \infty. \]
This definition generalizes the Muckenhoupt $A_p$ classes to the
variable setting.     We define the weighted variable Lebesgue space
$\Lp(w)$ to the set of all measurable functions $f$ such that
$\|fw\|_\pp < \infty$.

If $1<p_-\leq p_+< \infty$, $\pp$ is log-H\"older continuous locally
and at infinity, and $w\in A_\pp$, then the maximal operator is
bounded on $\Lp(w)$:  there exists a constant $C$ such that
\[ \|(Mf)w\|_\pp \leq C\|fw\|_\pp.  \]
Note that with these hypotheses, we have that $\cpp$ is also
log-H\"older continuous and $w^{-1}\in A_\cpp$; thus, the maximal
operator is also bounded on $L^\cpp(w^{-1})$.  Because of this, we
make the following definition: given an exponent $\pp$ and a weight
$w\in A_\pp$, we say that $(\pp,w)$ is an $M$-pair if the maximal operator is
bounded on $\Lp(w)$ and $L^\cpp(w^{-1})$.

We can now state the analog of Theorem~\ref{thm:democracy} for
weighted variable Lebesgue spaces.

\begin{theorem} \label{thm:wtd-democracy}
  Given an exponent function $\pp$, $1<p_-\leq p_+<\infty$, and
 a weight $w\in A_\pp$, suppose $(\pp,w)$ is an $M$-pair.
  Let $\Psi$ be an admissible orthonormal wavelet family.
The right and left democracy functions of $\Psi$ in
    $\Lp(w)$ satisfy
\[ h_r(N) \approx N^{1/p_-}, \qquad h_l(N) \approx N^{1/p_+}\,, \quad N = 1, 2, 3, \dots  \]
\end{theorem}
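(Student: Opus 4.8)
The plan is to follow the architecture of the proof of Theorem~\ref{thm:democracy} verbatim, replacing the unweighted space $\Lp(\R^n)$ by the weighted space $\Lp(w)$ throughout, and checking that each ingredient survives the passage to the weighted setting. The three structural pillars are: (i) the wavelet square function characterization $\|\mathcal{W}_\Psi f\|_{\Lp(w)} \approx \|f\|_{\Lp(w)}$ and unconditionality, which follow by the same Rubio de Francia extrapolation argument that gave Theorem~\ref{thm:var-wavelets}, now using the hypothesis that $(\pp,w)$ is an $M$-pair to run extrapolation in the weighted variable scale; (ii) the three auxiliary lemmas (Lemmas~\ref{lemma:diening}, \ref{lemma:dcu}, \ref{lemma:chema}) relating $|E|/|Q|$ to $\|w\chi_E\|_\pp/\|w\chi_Q\|_\pp$; and (iii) the ``lighted/shaded'' linearization of $S_\Gamma$ together with the four propositions (Propositions~\ref{prop:disjoint}, \ref{prop:linearize}, \ref{pro:bounds}, \ref{pro:sharp}) whose computations are purely measure-theoretic once the lemmas are in place.

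The first concrete step is to record the weighted analogs of the three lemmas. For Lemma~\ref{lemma:dcu} the argument is unchanged: $M(\chi_E)(x)\geq |E|/|Q|$ on $Q$, and the weighted weak-type bound $\|(t\chi_{\{Mf>t\}})w\|_\pp \le M_0\|fw\|_\pp$ (Chebyshev applied to the boundedness of $M$ on $\Lp(w)$, part of the $M$-pair assumption) gives $\tfrac{|E|}{|Q|}\|w\chi_Q\|_\pp \le M_0\|w\chi_E\|_\pp$. For Lemma~\ref{lemma:chema} one runs the same Rubio de Francia iteration, but now on the \emph{unweighted} space $L^\cpp$ applied to the function $gw^{-1}$ — more precisely, using the $M$-pair hypothesis, $M$ is bounded on $L^\cpp(w^{-1})$, so the algorithm $Rg = \sum_k M^kg/(2^k\|M\|^k_{L^\cpp(w^{-1})})$ produces an $A_1$ weight with controlled $A_1$ constant, hence an $A_\infty$ reverse-doubling estimate $Rg(E)/Rg(Q)\lesssim (|E|/|Q|)^\delta$; combining with the duality formula $\|w\chi_E\|_\pp \approx \sup_{\|hw^{-1}\|_\cpp=1}\int \chi_E h$ and $\int\chi_E h \le \int \chi_E (Rh)\,w^{-1}\cdot w$... — here I must be slightly careful to insert the weight correctly, writing $\int_E h = \int_E (hw^{-1}) w \le \int_E R(hw^{-1})\, w$ and using that $R(hw^{-1})\,w \in A_1$-type estimates, or more cleanly observing $w^{-1}\in A_\cpp$ and adapting. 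Lemma~\ref{lemma:diening} does require genuine modification: the clean Jensen/Hölder proof used $\||Q|^{-1/\cpp}\chi_Q\|_\cpp=1$, and in the weighted case the natural substitute is $|Q|^{1/p_Q} \lesssim \|w\chi_Q\|_\pp \|w^{-1}\chi_Q\|_\cpp / |Q| \cdot |Q|$... — in fact since $w\in A_\pp$ gives $|Q|^{-1}\|w\chi_Q\|_\pp\|w^{-1}\chi_Q\|_\cpp \le [w]_{A_\pp}$, one only needs a lower bound on $\|w\chi_Q\|_\pp$ in terms of $|Q|^{1/p_Q}$ and an upper bound on $\|w^{-1}\chi_Q\|_\cpp$, which the unweighted Lemma~\ref{lemma:diening} supplies up to the weight factor.

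With the weighted lemmas established, the linearization Proposition~\ref{prop:linearize} goes through word for word (it only used Lemma~\ref{lemma:chema} and the elementary lighted/shaded combinatorics from~\cite{MR2379116}), giving $S_\Gamma(x)\approx \sum_{Q\in\Gamma_{\rm min}} \chi_{\light(Q)}(x)/\|w\chi_Q\|_\pp$ where now $S_\Gamma = (\sum_{Q\in\Gamma}\chi_Q/\|w\chi_Q\|_\pp^2)^{1/2}$. The estimates in Proposition~\ref{pro:bounds} use only~\eqref{eqn:useful} (the integral characterization of the Luxemburg norm, valid for $\|fw\|_\pp$ by applying the unweighted identity to $fw$), the disjointness of the lighted sets, $p_-\le p(x)\le p_+$, and the weighted Lemma~\ref{lemma:dcu} — so they transfer directly. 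The sharpness Proposition~\ref{pro:sharp} likewise only needs the Lebesgue differentiation theorem, Lemma~\ref{lemma:dcu}, and Lemma~\ref{lemma:diening}; here one additionally picks Lebesgue points of the weight $w$ (using $w,w^{-1}\in L^1_{\rm loc}$) so that $\|w\chi_Q\|_\pp$ behaves like $w(\text{pt})|Q|^{1/p_Q}$ on the selected small cubes. Finally, assembling~\eqref{eqn:3.1} in the weighted space — $\|\psi_Q w\|_\pp \approx |Q|^{-1/2}\|w\chi_Q\|_\pp$ and $\|\sum_{Q\in\Gamma}\psi_Q/\|\psi_Q\|_{\Lp(w)}\|_{\Lp(w)} \approx \|S_\Gamma\|_{\Lp(w)}$ — yields $h_r(N)\approx N^{1/p_-}$, $h_l(N)\approx N^{1/p_+}$. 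I expect the main obstacle to be the weighted version of Lemma~\ref{lemma:diening} and the correct placement of the weight in the Rubio de Francia step of Lemma~\ref{lemma:chema}: these are the only places where the weight does not simply ``come along for the ride,'' and one must verify that $w\in A_\pp$ (together with the $M$-pair condition, which by~\cite{MR2927495} implies $w^{-1}\in A_\cpp$ and gives both $w,w^{-1}$ suitable $A_\infty$-type control) is exactly what makes the comparisons $\|w\chi_E\|_\pp/\|w\chi_Q\|_\pp \lesssim (|E|/|Q|)^\delta$ and its reverse hold.
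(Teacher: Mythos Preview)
Your overall architecture is right and matches the paper's: extrapolate to get the weighted square-function equivalence, prove weighted analogs of the three lemmas, and rerun Propositions~\ref{prop:disjoint}--\ref{pro:sharp}. The weighted Lemmas~\ref{lemma:dcu} and~\ref{lemma:chema} go essentially as you say (for the latter the paper builds the iteration so that $\|(Rg)w^{-1}\|_\cpp\le 2\|gw^{-1}\|_\cpp$, then replaces $Rg$ by $R(gw)$ and multiplies and divides by $w$ before H\"older --- worth writing out cleanly, but not a conceptual obstacle).

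There is, however, a genuine gap in your handling of the weighted Lemma~\ref{lemma:diening} and, more seriously, of Proposition~\ref{pro:sharp}. Your plan is to ``pick Lebesgue points of $w$ so that $\|w\chi_Q\|_\pp$ behaves like $w(\text{pt})\,|Q|^{1/p_Q}$.'' This is not available for a general $A_\pp$ weight: being a Lebesgue point of $w$ (or of $w^{-1}$) only controls averages $\avgint_Q w\to w(x_0)$, not $w(y)\approx w(x_0)$ uniformly on small $Q$, and weights like $|x|^\alpha$ oscillate unboundedly near the chosen point. So the approximation $\|w\chi_Q\|_\pp \approx w(x_0)\|\chi_Q\|_\pp$ is unjustified, and with it your $I_2$-estimate collapses.

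The fix the paper uses --- and the one idea you are missing --- is to replace Lebesgue measure by the measure $dW$, where $W(x)=w(x)^{p(x)}$, \emph{throughout} the weighted proof. The weighted Lemma~\ref{lemma:diening} becomes
\[
W(Q)^{1/p_{Q,w}} \le 2\|w\chi_Q\|_\pp,\qquad \frac{1}{p_{Q,w}}:=\frac{1}{W(Q)}\int_Q \frac{dW}{p(x)},
\]
proved by the same Jensen/H\"older argument with $dx$ replaced by $dW$. The key identity~\eqref{eqn:useful} takes the form $\int_E \|w\chi_E\|_\pp^{-p(x)}\,dW=1$, which is exactly what drives the computations in the weighted Propositions~\ref{prop:disjoint} and~\ref{pro:bounds}. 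For Proposition~\ref{pro:sharp}, one applies the Lebesgue differentiation theorem \emph{with respect to $dW$} (valid since the dyadic maximal function for $dW$ is weak $(1,1)$), obtaining cubes with $W(H_\epsilon\cap Q)/W(Q)>1-(2N)^{-1}$, $1/p_{Q,w}<1/(p_+-\epsilon)$, and $W(Q)\le 1$; the $I_2$-estimate then reads $W(Q)^{-p(x)/p_{Q,w}}<W(Q)^{-1}$ and $\int_{Q\setminus H_\epsilon}W(Q)^{-1}\,dW = W(Q\setminus H_\epsilon)/W(Q)<(2N)^{-1}$. Once you make this substitution $dx\rightsquigarrow dW$, $p_Q\rightsquigarrow p_{Q,w}$, $|Q|\rightsquigarrow W(Q)$, every step of Section~\ref{section:proof-democracy} goes through verbatim.
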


\begin{proof}
The proof of Theorem~\ref{thm:wtd-democracy} is nearly identical to
the proof of Theorem~\ref{thm:democracy}: here we describe the
changes.

First, we need the analog of Theorem~\ref{thm:var-wavelets} for the
weighted variable Lebesgue spaces.  Theorem~\ref{thm:var-wavelets} was proved
in~\cite{cruz-martell-perezBook} using Rubio de Francia extrapolation
in the scale of variable Lebesgue spaces.   Extrapolation can also be
used to prove norm inequalities in the weighted space $\Lp(w)$
provided that $(\pp,w)$ is an $M$-pair:  this was proved recently
in~\cite{cruz-wang14}.  Therefore, the same proof as
in~\cite{cruz-martell-perezBook} yields
\begin{equation} \label{eqn:wavelet-wtd}
\|(\mathcal{W}_\Psi f)w\|_\pp \approx \|fw\|_\pp.
\end{equation}

We replace Lemma~\ref{lemma:diening} with its weighted version:
\begin{equation} \label{eqn:diening-wtd}
W(Q)^{\frac{1}{p_{Q,w}}} \leq 2\|\chi_Q w\|_\pp,
\end{equation}
where we set $W(x)=w(x)^{p(x)}$ and
\[ \frac{1}{p_{Q,w}} = \frac{1}{W(Q)}\int_Q \frac{1}{p(x)}\,W(x)\,dx =
  \avgint_Q \frac{1}{p(x)}dW. \]
  The proof follows that of the unweighted version replacing $dx$ by
  $dW$. Before using H\"older's inequality we divide and
  multiply by $w$ and at the last step we use that
  $\|W(Q)^{-1/p'(\cdot)}\,W\,w^{-1}\|_{p'(\cdot)}=1$ by
  \eqref{eqn:useful}.

The weighted versions of Lemmas~\ref{lemma:dcu} and~\ref{lemma:chema}
hold:
\begin{gather}
\frac{|E|}{|Q|}\leq M_w\frac{\|\chi_E w\|_\pp}{\|\chi_Q w\|_\pp}
\label{eqn:dcu-wtd} \\
\frac{\|\chi_E w\|_\pp}{\|\chi_Q w\|_\pp} \leq
C\left(\frac{|E|}{|Q|}\right)^\delta
\label{eqn:chema-wtd},
\end{gather}
where $M_w$ is the norm of the maximal operator on $\Lp(w)$.  The
proofs follow the same steps, using the fact that since $(\pp,w)$ is
an $M$-pair, the maximal operator is bounded on $\Lp(w)$ and
$L^\cpp(w^{-1})$.  In the proof of \eqref{eqn:chema-wtd} the following
changes are required. First, we construct the Rubio de Francia iteration algorithm
using the norm of the maximal operator on
$L^{p(\cdot)}(w^{-1})$ so that $\|(Rg)w^{-1}\|_{p(\cdot)'}\le
2\,\|g\,w^{-1}\|_{p(\cdot)'}$. Second, we replace $Rg$ by
$R(gw)$. Third, before applying H\"older's inequality we
multiply and divide by $w$.

To modify the proof of Theorem~\ref{thm:wtd-democracy} proper  we
use~\eqref{eqn:wavelet-wtd} to replace~\eqref{eqn:3.1} with
\begin{multline} \label{eqn:3.1wtd}
\bigg\|\sum_{Q\in \Gamma} \frac{\psi_Qw}{\|\psi_Qw\|_\pp}\bigg\|_\pp
\\ \approx
\bigg\|\sum_{Q\in \Gamma} \frac{\psi_Qw}{|Q|^{-1/2} \|\chi_Qw\|_\pp}\bigg\|_\pp \approx
\bigg\|\bigg(\sum_{Q\in \Gamma}
\frac{\chi_Q}{\|\chi_Qw\|^2_\pp}\bigg)^{1/2}w\bigg\|_\pp .
\end{multline}
The proof of the weighted version of Proposition~\ref{prop:disjoint}
is exactly the same, replacing $dx$ by $dW$ and using the
fact that by~\eqref{eqn:useful}, for any set $E$,
\begin{equation}\label{eqn:w*}
\int_E \|\chi_E w\|_\pp^{-p(x)}dW = \int_E
\left(\frac{w(x)}{\|\chi_E w\|_\pp}\right)^{p(x)}\,dx = 1.
\end{equation}

The linearization
estimate in Proposition~\ref{prop:linearize} is the same, but defining
\[ S_\Gamma(x) = \left( \sum_{Q\in \Gamma}
  \frac{\chi_Q}{\|\chi_Qw\|_\pp^2}\right)^{1/2}, \]
replacing $dx$ by $dW$ and using~\eqref{eqn:chema-wtd} instead of
Lemma~\ref{lemma:chema}.  The proof of Proposition~\ref{pro:bounds} is
the same, replacing $dx$ by $dW$ and Lemma~\ref{lemma:dcu} by
\eqref{eqn:dcu-wtd} and using \eqref{eqn:w*}: the properties of lighted and shaded cubes are
geometric and so remain unchanged.

Finally, the proof of Proposition~\ref{pro:sharp} requires the
following changes. We construct $\Gamma_1$ much as before
(in particular the Lebesgue differentiation theorem is used in exactly the
same manner). The proof then proceeds the same way with $dW$ in place
of $dx$, with \eqref{eqn:dcu-wtd} replacing Lemma~\ref{lemma:dcu} and
by using at the end \eqref{eqn:w*}. To construct $\Gamma_2$ we
consider the same set $H_\epsilon$ but now the Lebesgue differentiation
theorem is applied to $p_{Q,w}$ with respect to the measure $dW$ and
for dyadic cubes.  (Recall that the dyadic
Hardy-Littlewood maximal function defined with respect to the measure $dW$ is
of weak-type $(1,1)$ with respect to $dW$ since $0<W(Q)<\infty$ for
every dyadic cube $Q$.)  In particular we obtain $W(H_\epsilon\cap
Q)/W(Q)>1-(2N)^{-1}$ and $1/p_{Q,w}<(p_+-\epsilon)^{-1}$ which
we use to replace \eqref{equ:3.8} and \eqref{equ:3.9}, respectively. Also, the
cubes $Q$ are taken so small that $W(Q)\le 1$. Given these changes the
remainder of the proof is the same \textit{mutatis mutandis}, replacing
$dx$ by $dW$, $p_Q$ by $p_{Q,w}$, Lemma~\ref{lemma:diening} by
\eqref{eqn:diening-wtd}, and using again \eqref{eqn:w*}.  \Bk
\end{proof}

\section{Variable Exponent Triebel-Lizorkin Spaces}
\label{section:var-TLS}

The theory of (nonhomogeneous) Triebel-Lizorkin spaces with variable
exponents has been developed by Diening, {\em et
  al.}~\cite{MR2498558} and Kempka~\cite{MR2499334,MR2767169}.  (Also
see Xu~\cite{MR2431378}.)   We refer the reader to these papers for
complete information.  Here, we sketch the essentials.

Let $\Pp_0$ be the set of all measurable exponent functions $\pp :
\R^n \rightarrow (0,\infty)$.  Then with the same definitions and
notation as used above, we can define the spaces $\Lp$; if $p_-<1$,
then $\|\cdot\|_\pp$ is a quasi-norm and $\Lp$ is a quasi-Banach
space.   The maximal operator will no longer be bounded on such
spaces; a useful substitute is the assumption that there exists $p_0$,
$0<p_0<p_-$, such that the maximal operator is bounded on
$L^{\pp/p_0}(\R^n)$.    This is the case if, for instance, if
$0<p_-\leq p_+<\infty$ and $\pp$ is log-H\"older continuous locally
and at infinity.   (For further information on these spaces,
see~\cite{cruz-wang-IUMJ14}, where they were used to define
variable Hardy spaces.)

To define the variable exponent Triebel-Lizorkin spaces we need three
exponent functions, $\pp$, $\qq$, and $\sss$.  We let $\pp,\,\qq\in
\Pp_0$ be such that $0< p_-\leq p_+<\infty$, $0<q_-\leq q_+<\infty$,
and $\pp,\,\qq$ are log-H\"older continuous locally and at infinity (see Section \ref{section:variable}).
We assume that $\sss$, the ``smoothness'' parameter, is in $L^\infty$
and is locally log-H\"older continuous.  (We note that
in~\cite{MR2498558} it was assumed that $s_-\geq 0$, but this
hypothesis was removed in~\cite{MR2499334,MR2767169}.)  Given these
exponents, the nonhomogeneous variable exponent Triebel-Lizorkin
space $\Fpv(\R^n)$ is defined using an approximation of the identity
on $\R^n$: for a precise definition,
see~\cite[Definition~3.3]{MR2498558} or \cite[Section~4]{MR2499334}.
These spaces have many properties similar to those of the usual
(constant exponent) Triebel-Lizorkin spaces.  In particular, if $1 <
p_- \leq p^+ < \infty$, $F^0_{\pp, 2} (\R^n) = \Lp(\R^n)$.  For
$p_->0$, $F^0_{\pp, 2} (\R^n) = h^\pp(\R^n)$, the local Hardy spaces
with variable exponent introduced by Nakai and
Sawano~\cite{MR2899976}.  When $s\geq 0$ is constant, $F^s_{\pp, 2}
(\R^n) = \mathfrak{L}^{s, p(\cdot)}(\R^n)$, the variable exponent
Bessel potential spaces introduced in~\cite{MR2314160,MR2339558}.
When $s\in \N$ these become the variable exponent Sobolev spaces,
$W^{s,\pp}$ (see \cite[Chapter 6]{cruz-fiorenza-book}).

A wavelet decomposition of variable exponent Triebel-Lizorkin spaces
was proved in~\cite{MR2767169}.  Let $\D^+$ be the collection of all
dyadic cubes $Q$ such that $|Q|\leq 1$.  Given an orthonormal wavelet family $\Psi
=\{\psi_1, \psi_2, \cdots , \psi_L\} \subset L^2(\R^n)$ with
appropriate smoothness and zero-moment conditions (determined by the
exponent functions $\pp,\,\qq,\,\sss$) we have that $f\in \Fpv$ if and
only if
\begin{equation} \label{equ:4.1}
f = \sum_{l=1}^L \sum_{Q\in \D^+} \langle f , \psi_Q^l\rangle \psi_Q^l,
\end{equation}
and this series converges unconditionally in $\Fpv$.
Moreover, if we define
$$
\mathcal{W}_\Psi^{\sss,\qq} f (x) =
\left(\sum_{l=1}^L \sum_{Q\in \D^+} \left(|\langle f , \psi_Q^l \rangle|
|Q|^{-\frac{s(x)}{n}-\frac{1}{2}} \chi_Q(x)\right)^{q(x)}\right)^{\frac{1}{q(x)}},
$$
then
\begin{equation} \label{equ:4.2}
\|f \|_{\Fpv} \approx \| \mathcal{W}_\Psi^{\sss,\qq} f\|_{\pp}\,.
\end{equation}

We want to stress that the above result is only known for the
nonhomogeneous, variable exponent Triebel-Lizorkin spaces, and it
remains an open problem to define and prove the basic properties of
variable exponent Triebel-Lizorkin spaces in the homogeneous case.
(See~\cite[Remark~2.4]{MR2498558}.)   Nevertheless, we can define the
space $\Fpdot$ with norm
\begin{equation} \label{eqn:hom-norm}
\|f \|_{\Fpdot} = \| \dot{\mathcal{W}}_\Psi^{\sss,\qq} f\|_{\pp}\,.
\end{equation}
where we define $\dot{\mathcal{W}}_\Psi^{\sss,\qq}$ exactly as
in~\eqref{equ:4.1} except that the sum is taken over all $Q\in \D$.

\medskip

The arguments given in Section \ref{section:proof-democracy} let us
extend Theorem~\ref{thm:democracy} to the  variable exponent
Triebel-Lizorkin spaces.  We first consider the homogeneous case
$\Fpdot(\R^n)$ with a constant smoothness parameter.

\begin{theorem} \label{thm:democracy-T-L} Let $\pp, \qq\in \Pp_0$ be
  two exponent functions that are log-H\"older continuous locally and
  at infinity and that satisfy $ 0 < p_- \leq p^+ < \infty$,  $0 < q_-
  \leq q^+ < \infty$  Let $ s\in \R$.   Suppose that $\Psi$ is an
  orthonormal wavelet family with sufficient smoothness.  Then the right and left democracy functions
  of $\Psi$ in $\Fpconst(\R^n)$ satisfy
\[ h_r(N) \approx N^{1/p_-}, \qquad h_l(N) \approx N^{1/p_+}\,, \quad N = 1, 2, 3, \dots  \]
\end{theorem}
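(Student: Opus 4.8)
The plan is to follow the proof of Theorem~\ref{thm:democracy} line by line, using the wavelet characterization \eqref{eqn:hom-norm}--\eqref{equ:4.2} in place of Theorem~\ref{thm:var-wavelets}. The first step is to reduce, exactly as before, to estimating the norm of a square-function-type object. Using \eqref{eqn:hom-norm} with constant $s$ and the fact that $\psi_Q$ is supported (in the relevant sense) around $Q$ with $|Q|^{-s/n-1/2}$ the natural normalization, one has $\|\psi_Q\|_{\Fpconst}\approx \||Q|^{-s/n-1/2}\chi_Q\|_{\pp}=|Q|^{-s/n-1/2}\|\chi_Q\|_{\pp}$, so for a finite collection $\Gamma\subset\D$ of dyadic cubes,
\[
\bigg\|\sum_{Q\in\Gamma}\frac{\psi_Q}{\|\psi_Q\|_{\Fpconst}}\bigg\|_{\Fpconst}
\approx
\bigg\|\bigg(\sum_{Q\in\Gamma}\frac{\chi_Q}{\|\chi_Q\|_{\pp}^{2}}\bigg)^{1/2}\bigg\|_{\pp},
\]
the point being that the $|Q|^{-s/n}$ factors and the inner exponent $q(x)$ cancel out of the normalized sum, so the right-hand side is literally the same quantity $\|S_\Gamma\|_{\pp}$ appearing in Section~\ref{section:proof-democracy}. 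Thus the theorem reduces to Proposition~\ref{pro:bounds} and Proposition~\ref{pro:sharp}.

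The second step is to check that Lemmas~\ref{lemma:diening}, \ref{lemma:dcu}, \ref{lemma:chema} and Propositions~\ref{prop:disjoint}--\ref{pro:sharp} still apply. The subtlety is that we no longer assume $\pp\in M\Pp$ — indeed $p_-$ may be less than $1$ — so the maximal operator need not be bounded on $\Lp$. Here I would use the standing hypothesis that $\pp$ is log-H\"older continuous and $0<p_-\le p_+<\infty$: pick $p_0$ with $0<p_0<p_-$ such that $M$ is bounded on $L^{\pp/p_0}$. Then I would run all the lemmas in the space $L^{\pp/p_0}$, which is a genuine variable Lebesgue space with lower index bigger than $1$ and with the maximal operator bounded, and transfer the conclusions back to $\Lp$ via the identity $\|f\|_{\pp}=\big\||f|^{p_0}\big\|_{\pp/p_0}^{1/p_0}$ and $\|\chi_E\|_{\pp}=\|\chi_E\|_{\pp/p_0}^{1/p_0}$. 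In particular Lemma~\ref{lemma:chema} for $L^{\pp/p_0}$ gives $\|\chi_E\|_{\pp}/\|\chi_Q\|_{\pp}\le C(|E|/|Q|)^{\delta/p_0}$, which is all that is needed for the linearization in Proposition~\ref{prop:linearize}; Lemma~\ref{lemma:dcu} transfers similarly (with the constant $M_0$ replaced by the norm of $M$ on $L^{\pp/p_0}$, raised to the power $1/p_0$); and Lemma~\ref{lemma:diening} holds for any exponent with $1<p_-\le p_+<\infty$ applied to $\pp/p_0$, giving $|Q|^{p_0/p_Q}\le 2^{p_0}\|\chi_Q\|_{\pp/p_0}$, i.e. $|Q|^{1/p_Q}\le C\|\chi_Q\|_{\pp}$, which is the form used in Proposition~\ref{pro:sharp}.

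The third step is then purely mechanical: Proposition~\ref{prop:disjoint} and the upper and lower bounds in Proposition~\ref{pro:bounds} are proved by testing against the $\Lp$-norm via \eqref{eqn:useful} (valid since $p_+<\infty$), using disjointness of the lighted sets and the inequalities $p_-\le p(x)\le p_+$ exactly as written; none of these computations used $p_->1$, only $p_+<\infty$ together with the already-transferred Lemmas~\ref{lemma:dcu} and~\ref{lemma:chema}. Likewise Proposition~\ref{pro:sharp}, which constructs the extremal families $\Gamma_1,\Gamma_2$, goes through verbatim — the Lebesgue differentiation argument is unchanged, \eqref{eqn:useful} still holds, and the only auxiliary estimates invoked are Lemma~\ref{lemma:dcu} and Lemma~\ref{lemma:diening}, both available after the $p_0$-reduction. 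Combining Propositions~\ref{pro:bounds} and~\ref{pro:sharp} with the reduction of the first paragraph yields $h_r(N)\approx N^{1/p_-}$ and $h_l(N)\approx N^{1/p_+}$.

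I expect the main obstacle to be the very first step: verifying rigorously that $\|\psi_Q\|_{\Fpconst}\approx |Q|^{-s/n-1/2}\|\chi_Q\|_{\pp}$ and, more importantly, that \eqref{equ:4.2}/\eqref{eqn:hom-norm} genuinely reduces $\big\|\sum_{Q\in\Gamma}\psi_Q/\|\psi_Q\|\big\|_{\Fpconst}$ to $\|S_\Gamma\|_{\pp}$ when the inner $\ell^{q(x)}$ sum is restricted to the (not necessarily disjoint) cubes of $\Gamma$ — one must check that $\big(\sum_{Q\in\Gamma}(|Q|^{-s/n}|\langle f,\psi_Q\rangle| \, |Q|^{-1/2}\chi_Q)^{q(x)}\big)^{1/q(x)}$ with the chosen coefficients is pointwise comparable to $S_\Gamma$, which again follows from the same geometric-series argument via the transferred Lemma~\ref{lemma:chema}, now applied to the $q(x)$-summation. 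The homogeneity in $s$ and the exact form of the normalization are what make the inner $q(x)$-power disappear; once that is in place, everything else is a transcription of Section~\ref{section:proof-democracy}.
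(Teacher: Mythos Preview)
Your proposal is essentially correct and follows the paper's approach almost exactly: the $p_0$-reduction to transfer Lemmas~\ref{lemma:diening}--\ref{lemma:chema} to the quasi-Banach range, the linearization via lighted/shaded cubes, and the verbatim reuse of Propositions~\ref{pro:bounds} and~\ref{pro:sharp} are all what the paper does.

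One correction to your first paragraph: the inner exponent $q(x)$ does \emph{not} cancel out of the normalized sum. Applying \eqref{eqn:hom-norm} with constant $s$ and $\|\psi_Q\|_{\Fpconst}=|Q|^{-s/n-1/2}\|\chi_Q\|_\pp$ gives
\[
\bigg\|\sum_{Q\in\Gamma}\frac{\psi_Q}{\|\psi_Q\|_{\Fpconst}}\bigg\|_{\Fpconst}
=\bigg\|\bigg(\sum_{Q\in\Gamma}\frac{\chi_Q(x)}{\|\chi_Q\|_{\pp}^{q(x)}}\bigg)^{1/q(x)}\bigg\|_{\pp}
=:\|S_\Gamma^{\pp,\qq}\|_\pp,
\]
not $\|S_\Gamma\|_\pp$ with inner exponent $2$. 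What you correctly sketch in your last paragraph is exactly the fix: the linearization argument of Proposition~\ref{prop:linearize}, run with the transferred Lemma~\ref{lemma:chema} and using $q(x)\ge q_->0$, yields $S_\Gamma^{\pp,\qq}(x)\approx \chi_{Q_x}(x)/\|\chi_{Q_x}\|_\pp\approx S_\Gamma(x)$ pointwise, and from that equivalence onward the proof is identical to Section~\ref{section:proof-democracy}. This is precisely the route the paper takes.
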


\begin{proof}
To modify the proof of Theorem~\ref{thm:democracy} we must first give
variants of Lemmas~\ref{lemma:diening}, \ref{lemma:dcu}, and
\ref{lemma:chema}.  Fix $p_0$, $0<p_0<p_-$.  Then, as we noted above,
the maximal operator is bounded on $L^{\pp/p_0}$.  Moreover, by a
change of variable in the definition of the $\Lp$ norm, we have that
for any set $E\subset \R^n$ and $\tau>0$,
\[ \|\chi_E\|_\pp = \|\chi_E^{\tau}\|_\pp =
\|\chi_E\|_{\tau\pp}^{\tau}. \]
Therefore, if we apply Lemma~\ref{lemma:diening} to the exponent
$\pp/p_0$, we get that
\begin{equation} \label{lemma-diening-TL}
 |Q|^{\frac{p_0}{p_Q}} \leq 2 \|\chi_Q\|_{\pp/p_0} =
  2\|\chi_Q\|^{p_0}_\pp.
\end{equation}
Similarly, we can conclude from Lemmas~\ref{lemma:dcu} and
\ref{lemma:chema} that if $E\subset Q$, then
\begin{gather} \label{eqn:dcu-TL}
\frac{|E|}{|Q|} \leq M_0
\left(\frac{\|\chi_E\|_\pp}{\|\chi_Q\|_\pp}\right)^{p_0} \\
\intertext{and}
\left(\frac{\|\chi_E\|_\pp}{\|\chi_Q\|_\pp}\right)^{p_0}
\leq C\left(\frac{|E|}{|Q|} \right)^{\delta}.
\label{eqn:chema-TL}
\end{gather}

\medskip

Turning to the proof proper, we may first assume, as in the proof
of~Theorem \ref{thm:democracy}, that $L=1$. We then need to prove the
lower and upper bounds for
\[ \bigg\|\sum_{Q\in \Gamma} \frac{\psi_Q}{\|\psi_Q\|_{\Fpconst}}\bigg\|_{\Fpconst}, \]
where $\Gamma$ is a finite set of dyadic cubes with
$\card(\Gamma)=N$.
By~\eqref{eqn:hom-norm},
\begin{equation} \label{eqn:pullout}
 \|\psi_Q\|_{\Fpconst} = |Q|^{-\frac{s}{n} - \frac{1}{2}} \|\chi_Q\|_\pp,
\end{equation}
and so
\begin{multline} \label{equ:4.3}
\bigg\|\sum_{Q\in \Gamma}
\frac{\psi_Q}{\|\psi_Q\|_{\Fpconst}}\bigg\|_{\Fpconst} \\
=\bigg\|\sum_{Q\in \Gamma}
\frac{\psi_Q}{|Q|^{-\frac{s}{n}-\frac{1}{2}}
  \|\chi_Q\|_\pp}\bigg\|_{\Fpconst}
= \bigg\|\bigg(\sum_{Q\in \Gamma}
\frac{\chi_Q(x)}{\|\chi_Q\|^{q(x)}_\pp}\bigg)^{1/q(x)}\bigg\|_\pp .
\end{multline}

When the cubes in $\Gamma$ are pairwise disjoint, the proof of
Proposition~\ref{prop:disjoint} is unchanged.   To modify the proof of
Proposition~\ref{prop:linearize}, define
$$
S_\Gamma^{\pp,\qq} (x) = \bigg(\sum_{Q\in \Gamma}
\frac{\chi_Q(x)}{\|\chi_Q\|^{q(x)}_\pp}\bigg)^{1/q(x)}.
$$
Then
\begin{equation} \label{equ:4.4}
\bigg\|\sum_{Q\in \Gamma} \frac{\psi_Q}{\|\psi_Q\|_{\Fpconst}}\bigg\|_{\Fpconst} =
\| S_\Gamma^{\pp,\qq}\|_\pp\,.
\end{equation}
With the same notation as before, we clearly have that
$$
\frac{\chi_{Q_x}(x)}{\|\chi_{Q_x}\|_{\pp}} \leq
S_\Gamma^{\pp,\qq} (x).
$$
We prove the opposite inequality almost as before, using
\eqref{eqn:chema-TL} instead of Lemma~\ref{lemma:chema}:
\begin{multline*}
 S_\Gamma^{\pp,\qq} (x)
\leq \bigg(\sum_{Q \supset Q_x}
\frac{1}{\|\chi_Q\|^{q(x)}_\pp}\bigg)^{1/q(x)} \\
\leq \bigg(\sum_{j=0}^\infty \frac{C}{\|\chi_{Q_x}\|^{q(x)}_\pp}
2^{-jn\delta q(x)/p_0}\bigg)^{1/q(x)}
= C \frac{\chi_{Q_x}(x)}{\|\chi_{Q_x}\|_\pp}\,;
\end{multline*}
in the last inequality we use the fact that $q(x) \geq q_- > 0.$ Therefore,
\[ S_\Gamma^{\pp,\qq} (x) \approx \frac{\chi_{Q_x}(x)}{\|\chi_{Q_x}\|_\pp}\,. \]

From here, the proof of Theorem \ref{thm:democracy-T-L} is exactly the
same as that of Theorem \ref{thm:democracy}: the proofs of
Propositions~\ref{pro:bounds} and~\ref{pro:sharp} are the same since
$S_\Gamma(x) \approx S_\Gamma^{\pp,\qq} (x)$.  We only note that
because we use \eqref{lemma-diening-TL} in place of Lemma
\ref{lemma:diening} and \eqref{eqn:dcu-TL} instead of
Lemma~\ref{lemma:dcu}, some of the constants which appear must be
adjusted to account for the exponent $p_0$.
\end{proof}

In the nonhomogeneous case we may take $\sss$ to be variable.

\begin{theorem} \label{thm:democracy-T-L-var} Let $\pp, \qq\in \Pp_0$ be
  two exponent functions that are log-H\"older continuous locally and
  at infinity and that satisfy $ 0 < p_- \leq p^+ < \infty$,  $0 < q_-
  \leq q^+ < \infty$  Let $\sss \in L^\infty$ be locally log-H\"older continuous.
  Suppose that $\Psi$ is an
  orthonormal wavelet family with sufficient smoothness (i.e., so
  that~\eqref{equ:4.1} and~\eqref{equ:4.2} hold).  Then the right and left democracy functions
  of $\Psi$ in $\Fpv(\R^n)$ satisfy
\[ h_r(N) \approx N^{1/p_-}, \qquad h_l(N) \approx N^{1/p_+}\,, \quad N = 1, 2, 3, \dots  \]
\end{theorem}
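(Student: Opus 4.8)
The plan is to follow the blueprint of the proof of Theorem~\ref{thm:democracy-T-L} almost verbatim, the only genuinely new feature being that the smoothness parameter $\sss$ is now variable and that we work with the nonhomogeneous spaces $\Fpv(\R^n)$, whose wavelet characterization~\eqref{equ:4.1}--\eqref{equ:4.2} is only available over the restricted collection $\D^+ = \{Q \in \D : |Q| \leq 1\}$. First I would record the analogs of Lemmas~\ref{lemma:diening}, \ref{lemma:dcu}, and~\ref{lemma:chema}: these are exactly the estimates~\eqref{lemma-diening-TL}, \eqref{eqn:dcu-TL}, \eqref{eqn:chema-TL} obtained by applying the unweighted lemmas to the exponent $\pp/p_0$ for a fixed $p_0$ with $0<p_0<p_-$ (recall $\pp$ log-H\"older continuous locally and at infinity forces $M$ bounded on $L^{\pp/p_0}$). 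Nothing about $\sss$ or $\qq$ enters here.

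Next I would reduce, as before, to $L=1$ and to estimating $\big\|\sum_{Q\in\Gamma}\psi_Q/\|\psi_Q\|_{\Fpv}\big\|_{\Fpv}$ for a finite $\Gamma\subset\D^+$ with $\card(\Gamma)=N$ (note that now $\Gamma$ must consist of cubes of side length $\leq 1$, since these are the only wavelets indexing the basis). Using~\eqref{equ:4.2} one computes $\|\psi_Q\|_{\Fpv} = |Q|^{-s(x)/n - 1/2}|_{x}\cdots$ — more precisely the analog of~\eqref{eqn:pullout} gives $\|\psi_Q\|_{\Fpv}\approx \big\||Q|^{-s(\cdot)/n-1/2}\chi_Q\big\|_\pp$, and since $\sss\in L^\infty$ and $|Q|\le 1$ we have $|Q|^{-s(x)/n}\approx_{\|\sss\|_\infty} |Q|^{-s_Q/n}$ on $Q$ for any fixed reference value $s_Q$; hence $\|\psi_Q\|_{\Fpv}\approx |Q|^{-s_Q/n-1/2}\|\chi_Q\|_\pp$ with constants depending only on $\|\sss\|_\infty$. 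Substituting into $\mathcal{W}_\Psi^{\sss,\qq}$, the powers $|Q|^{-s(x)/n}$ in the square-function again cancel up to the same harmless constant, so one arrives at the pointwise equivalence
\[
\bigg\|\sum_{Q\in\Gamma}\frac{\psi_Q}{\|\psi_Q\|_{\Fpv}}\bigg\|_{\Fpv}
\approx \bigg\|\bigg(\sum_{Q\in\Gamma}\frac{\chi_Q(x)}{\|\chi_Q\|_\pp^{q(x)}}\bigg)^{1/q(x)}\bigg\|_\pp
= \|S_\Gamma^{\pp,\qq}\|_\pp,
\]
exactly the quantity already analyzed in the proof of Theorem~\ref{thm:democracy-T-L}.

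From that point the proof is identical to the homogeneous constant-smoothness case: the linearization Proposition~\ref{prop:linearize} goes through with $S_\Gamma^{\pp,\qq}$ in place of $S_\Gamma$ using~\eqref{eqn:chema-TL} and $q(x)\ge q_->0$ (the lighted/shaded cube combinatorics of~\cite{MR2379116} are purely geometric and restricting to $\D^+$ does not affect them), and then Propositions~\ref{pro:bounds} and~\ref{pro:sharp} carry over verbatim because they depend on $\Gamma$ only through $S_\Gamma^{\pp,\qq}\approx S_\Gamma$; in Proposition~\ref{pro:sharp} one simply chooses the Lebesgue-point cubes $Q_x$ with side length small enough that automatically $|Q_x|\le 1$, so the resulting families $\Gamma_1,\Gamma_2$ lie in $\D^+$ as required. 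The only point demanding a little care — and the step I'd flag as the main (if modest) obstacle — is the uniform control of the factors $|Q|^{\pm s(x)/n}$ on each $Q\in\D^+$: one must check that the oscillation of $\sss$ over $Q$ contributes only a multiplicative constant (here local log-H\"older continuity of $\sss$ together with $|Q|\le 1$ gives $|Q|^{(s(x)-s(y))/n}\le e^{C_0/n}$ for $x,y\in Q$), so that the $\sss$-dependence disappears into the implicit constants before any of the norm estimates begin; once this is done, the variable smoothness plays no further role and the democracy functions come out as $h_r(N)\approx N^{1/p_-}$, $h_l(N)\approx N^{1/p_+}$ exactly as in Theorem~\ref{thm:democracy-T-L}.
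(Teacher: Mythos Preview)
Your plan is essentially the paper's own proof: reduce to the quantity $\|S_\Gamma^{\pp,\qq}\|_\pp$ exactly as in Theorem~\ref{thm:democracy-T-L}, the only new step being to show that the variable-smoothness factors $|Q|^{-s(x)/n}$ cancel up to a uniform constant, which you correctly identify as a consequence of the local log-H\"older continuity of $\sss$ together with $|Q|\le 1$ (the paper phrases this as $|Q|^{s_-(Q)-s_+(Q)}\le C$). One small correction: in your middle paragraph you write that $|Q|^{-s(x)/n}\approx |Q|^{-s_Q/n}$ with constants depending only on $\|\sss\|_\infty$---this is false as stated ($L^\infty$ alone gives no control on $|Q|^{s_-(Q)-s_+(Q)}$), and it is precisely the log-H\"older condition, which you do invoke correctly at the end, that supplies the bound; just make sure the write-up does not attribute the estimate to $\|\sss\|_\infty$.
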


\begin{proof}
The proof is nearly identical to the proof of
Theorem~\ref{thm:democracy-T-L}.  The key difference is in
equalities~\eqref{eqn:pullout} and~\eqref{equ:4.3}.
In~\eqref{eqn:pullout} we used the fact that $s$ was constant in order
to pull the term $|Q|^{-\frac{s}{n}-\frac{1}{2}}$ out of the $\Lp$
norm.  We can no longer do this if $\sss$ is a function.

However, we can use local
log-H\"older continuity and the fact that $|Q|\le 1$ to prove the
analog of~\eqref{equ:4.3}.  A very important consequence of the log-H\"older
continuity of $s(\cdot)$ is that there exists $C>1$ such that for every cube $Q$,
\[ |Q|^{s_-(Q)-s_+(Q)} \leq C. \]
(See~\cite[Lemma~3.24]{cruz-fiorenza-book}.)   In particular,  for any $x\in Q$ with $|Q|\le 1$,
\begin{gather*}
|Q|^{-s(x)} = |Q|^{-s(x)+s_-(Q)}|Q|^{-s_-(Q)} \leq C |Q|^{-s_-(Q)}, \\
|Q|^{-s(x)}=  |Q|^{-s(x)+s_+(Q)}|Q|^{-s_+(Q)}  \geq C^{-1} |Q|^{-s_+(Q)}.
\end{gather*}
Therefore, by~\eqref{equ:4.2} we have that
\[  \|\psi_Q\|_{\Fpv} \approx
\||Q|^{-\frac{\sss}{n}-\frac{1}{2}}\chi_Q\|_\pp \lesssim
|Q|^{-\frac{s_-(Q)}{n}-\frac{1}{2}}\|\chi_Q\|_\pp \]
and
\[  \|\psi_Q\|_{\Fpv} \gtrsim
|Q|^{-\frac{s_+(Q)}{n}-\frac{1}{2}}\|\chi_Q\|_\pp. \]
Moreover, because every $Q\in \Gamma$ is such that $|Q|\leq 1$, we
have that
\begin{multline*}
\bigg\|\sum_{Q\in \Gamma}
\frac{\psi_Q}{\|\psi_Q\|_{\Fpv}}\bigg\|_{\Fpv}
\lesssim \bigg\|\sum_{Q\in \Gamma}
\frac{\psi_Q}{|Q|^{-\frac{s_+(Q)}{n}-\frac{1}{2}}
  \|\chi_Q\|_\pp}\bigg\|_{\Fpv}  \\
\lesssim
\bigg\| \bigg( \sum_{Q\in \Gamma} \bigg[
\frac{ |Q|^{-\frac{s(x)}{n}-\frac{1}{2}}\chi_Q(x)}
{|Q|^{-\frac{s_+(Q)}{n}-\frac{1}{2}}
  \|\chi_Q\|_\pp}\bigg]^{q(x)}\bigg)^{1/q(x)}\bigg\|_\pp
\lesssim
\bigg\| \bigg( \sum_{Q\in \Gamma}
\frac{ \chi_Q(x)}{  \|\chi_Q\|_\pp^{q(x)}}\bigg)^{1/q(x)}\bigg\|_\pp.
\end{multline*}
In the same way, and again using strongly that $|Q|\leq 1$, we have
\[ \bigg\|\sum_{Q\in \Gamma}
\frac{\psi_Q}{\|\psi_Q\|_{\Fpv}}\bigg\|_{\Fpv}
\gtrsim
\bigg\| \bigg( \sum_{Q\in \Gamma}
\frac{ \chi_Q(x)}{  \|\chi_Q\|_\pp^{q(x)}}\bigg)^{1/q(x)}\bigg\|_\pp. \]
Given this equivalence, the proof now continues exactly as in the
proof of Theorem~\ref{thm:democracy-T-L}.
\end{proof}

\bibliographystyle{plain}
\bibliography{greedyvar}

\end{document}